\documentclass[12pt]{amsart}

\usepackage{a4wide}
\usepackage{amsmath, amssymb}
\usepackage{amsthm}
\usepackage[utf8]{inputenc}
\usepackage{subfigure}
\usepackage{enumerate}
\usepackage{cancel}
\usepackage{xspace}
\usepackage{float}

\usepackage{mathrsfs}
\usepackage[colorlinks, citecolor=blue, anchorcolor=blue, linkcolor=blue, urlcolor=blue]{hyperref}
\usepackage{color,xcolor}

\usepackage{wrapfig}

\usepackage{tikz}
\usetikzlibrary{patterns}

\usepackage{algorithm}
\usepackage{algorithmic}

\def\SYoung#1{\vbox{\smallskip\offinterlineskip
    \halign{&\vbox{##}\kern-\SThickness\cr #1}}}

\newdimen\SSquaresize \SSquaresize=4.5pt
\newdimen\SThickness \SThickness=.15pt
\newdimen\SCorrection \SCorrection=7pt

\def\SCarre#1{\hbox{\vrule width \SThickness
   \vbox to \SSquaresize{\hrule height \SThickness\vss
      \hbox to \SSquaresize{\hss$\scriptstyle#1$\hss}
   \vss\hrule height\SThickness}
   \unskip\vrule width \SThickness}
   \kern-\SThickness}

\allowdisplaybreaks

\makeatletter \@addtoreset{equation}{section}

\newtheorem{theorem}{Theorem}[section]
\newtheorem{proposition}[theorem]{Proposition}

\newtheorem{conjecture}[theorem]{Conjecture}

\newcounter{mmacnt}
\def\restartmma{\setcounter{mmacnt}{0}}
\restartmma
\catcode`|=\active
\def|#1|{\mathrm{#1}}
\catcode`|=12
\def\mma@slash{/}
\catcode`/=\active
\makeatother\def\mma{@}\makeatletter\let\mma@at\mma\let\mma\undefined
\def/{\@ifnextchar/{\ifmmode\mathop{\;\mma@slash\!\mma@slash\;}\else\mma@slash\!\mma@slash\fi\mma@eat}{\mma@B}}
\def\mma@B{\@ifnextchar.{\ifmmode\mathop{\;\mma@slash\!.\;}\else\mma@slash\!.\fi\mma@eat}{\mma@C}}
\def\mma@C{\expandafter\@ifnextchar\mma@at{\ifmmode\mathop{\;\mma@slash\kern-.25ex @\;}\else\mma@slash\!@\fi\mma@eat}{\mma@D}}
\def\mma@D{\@ifnextchar;{\ifmmode\mathop{\;\mma@slash\!;\;}\else\mma@slash\!;\fi\mma@eat}{\mma@slash}}
\catcode`/=12
\def\mma@eat#1{}
\newenvironment{mma}{%
 \par\smallskip
 \catcode`|=\active\catcode`/=\active
 \parskip=0pt\parindent=0pt 
 \small
 \def\In##1\\{%
   \def\linebreak{\hfil\break\null\qquad}%
   \refstepcounter{mmacnt}%
   \hangindent=2.5em\hangafter=0
   \leavevmode
   \llap{\tiny\sffamily In[\arabic{mmacnt}]:=\kern.5em}%
   \mathversion{bold}$\displaystyle##1$
   \mathversion{normal}\par\kern-\lineskip
 }%
 \def\Print##1\\{%
   \def\linebreak{\hfil\break}%
   \hangindent=2.5em\hangafter=0
   \leavevmode ##1\par}%
 \def\Out{\@ifnextchar[{\mma@out}{\mma@out[@]}}%
 \def\mma@out[##1]##2\\{%
   \def\linebreak{\hfil\break\null}%
   \kern\abovedisplayskip\par
   \hangindent=2.5em\hangafter=0
   {\kern\lineskip\advance\lineskip2pt\kern-\lineskip
   \leavevmode
   \llap{\tiny\sffamily Out[\arabic{mmacnt}]=\kern.5em}%
   \ifx ##1@\else
     \rlap{\kern\hsize\kern-2.5em\llap{(##1)}}%
   \fi
   $\displaystyle##2$\hfil\null
   \par\kern-\lineskip}%
   \kern\lineskip
   \kern\belowdisplayskip
 }%
 \def\Warning##1##2\\{%
   \def\linebreak{\hfil\break}%
   \hangindent=2.5em\hangafter=0
   \leavevmode
   {\scriptsize##1 : ##2}\par}%
}{%
}

\newcommand{\refOut}[1]{{\small\sffamily Out[#1]}}

\def\MLabel#1{{\refstepcounter{mmacnt}\label{#1}}\addtocounter{mmacnt}{-1}}

\sloppy

\title[$\infty$-log-concavity \& higher order Tur\'{a}n inequality for $\{g_{U_{n,d}}(t)\}_{d=1}^{n-1}$]{Infinite log-concavity and higher order Tur\'{a}n inequality for the sequences of Speyer's $g$-polynomial of uniform matroids}

\author[James J. Y. Zhao]{James Jing Yu Zhao}
       \address{School of Accounting, Guangzhou College of Technology and Business,
       Foshan 528138, P. R. China.}
       \email{zhao@gzgs.edu.cn}

\subjclass{Primary 05B35, 26C10, 33F10, 05A20, 60F05}

\keywords{Speyer's $g$-polynomial, uniform matroid, infinitely log-concavity, higher order Tur\'{a}n inequality, real zeros, interlacing property, asymptotic normality}

\begin{document}

\begin{abstract}
Let $U_{n,d}$ be the uniform matroid of rank $d$ on $n$ elements. Denote by $g_{U_{n,d}}(t)$ the Speyer's $g$-polynomial of $U_{n,d}$. The Tur\'{a}n inequality and higher order Tur\'{a}n inequality are related to the Laguerre-P\'{o}lya ($\mathcal{L}$-$\mathcal{P}$) class of real entire functions, and the $\mathcal{L}$-$\mathcal{P}$ class has close relation with the Riemann hypothesis. The Tur\'{a}n type inequalities have received much attention. Infinite log-concavity is also a deep generalization of Tur\'{a}n inequality with different direction. In this paper, we mainly obtain the infinite log-concavity and the higher order Tur\'{a}n inequality of the sequence $\{g_{U_{n,d}}(t)\}_{d=1}^{n-1}$ for any $t>0$. In order to prove these results, we show that the generating function of $g_{U_{n,d}}(t)$, denoted $h_n(x;t)$, has only real zeros for $t>0$. Consequently, for $t>0$, we also obtain the $\gamma$-positivity of the polynomial $h_n(x;t)$, the asymptotical normality of $g_{U_{n,d}}(t)$, and the Laguerre inequalities for $g_{U_{n,d}}(t)$ and $h_n(x;t)$.
\end{abstract}

\maketitle

\section{Introduction}\label{sec:1}

In the study of the tropical analogues of the notions of linear spaces and Pl\"{u}cker coordinates, Speyer \cite{Speyer2008} found that all constructible tropical linear spaces have the same $f$-vector and are series-parallel. He also conjectured that this $f$-vector is maximal for all tropical linear spaces, with equality precisely for the series-parallel tropical linear spaces.
In the study of his $f$-vector conjecture, Speyer \cite{Speyer2009} discovered an invariant, denoted by $g_M(t)$,
for a (loopless and coloopless) $\mathbb{C}$-realizable matroid $M$.
By using this invariant, Speyer proved bounds on the complexity of Kapranov’s Lie
complexes \cite{Kapranov1993}, Hacking, Keel and Tevelev’s very stable pairs \cite{HKT2006}, and tropical linear spaces defined in \cite{Speyer2008}. It turns out that this invariant is of independent combinatorial interest.
In a subsequent work of Fink and Speyer \cite{FinkSpeyer2012}, the invariant $g_M(t)$ was defined for
an arbitrary (loopless and coloopless) matroid $M$.
L\'opez de Medrano, Rinc\'on and Shaw \cite{LRS2020} conjectured a Chow-theoretic formula for Speyer's $g$-polynomial, which was later proved by Berget, Eur, Spink and Tseng \cite{BEST-2023}. Ferroni \cite{Ferroni} provided a combinatorial way of computing $g_M(t)$ for an arbitrary Schubert matroid $M$. Recently, Ferroni and Schr\"{o}ter \cite{Ferroni-Schroter-2023} proved that the coefficients of $g_M(t)$ are nonnegative for any sparse paving matroid $M$.

It is worthwhile to mention that the $g$-polynomial of uniform matroids has been computed by Speyer \cite{Speyer2009}, and this polynomial plays a key role in his proof of the $f$-vector conjecture in the case of a tropical linear space realizable in characteristic zero. Denote by $U_{n,d}$ the uniform matroid of rank $d$ on $n$ elements. Speyer \cite{Speyer2009} obtained the following result.

\begin{proposition}[{\cite[Proposition 10.1]{Speyer2009}}]
For any $n\geq 2$ and $1\leq d\leq n-1$, the $g$-polynomial of the uniform matroid $U_{n,d}$ is given by
\begin{align}\label{g-uni-dn(t)}
 g_{U_{n,d}}(t)=\sum_{i=1}^{\min(d,n-d)} \frac{(n-i-1)!}{(d-i)!(n-d-i)!(i-1)!}\, t^i.
\end{align}
\end{proposition}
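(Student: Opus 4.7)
The plan is to derive the formula from Speyer's original definition of $g_M(t)$ as a valuative matroid invariant, exploiting the rich symmetry of the uniform matroid.

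First, I would recall the key properties of $g_M(t)$: it is valuative with respect to matroid polytope subdivisions, it vanishes identically on any matroid that has a loop, a coloop, or admits a non-trivial direct sum decomposition, and its degree is bounded by $\min(r(M), r(M^*))$. For $1 \le d \le n-1$, the uniform matroid $U_{n,d}$ is connected, loopless, and coloopless, so $g_{U_{n,d}}(t)$ is supported in degrees $1 \le i \le \min(d, n-d)$, which matches the range of summation in \eqref{g-uni-dn(t)} and also forces the constant term to vanish.

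Second, I would set up an inductive computation by subdividing the hypersimplex $\Delta_{n,d}$ (the matroid polytope of $U_{n,d}$) along a hyperplane of the form $\sum_{j \in A} x_j = k$ for an appropriate subset $A \subset [n]$. The resulting matroid polytope pieces correspond either to direct sums $U_{|A|,k} \oplus U_{n-|A|,d-k}$, on which $g$ vanishes by hypothesis, or to matroids whose $g$-polynomials are accessible by induction on $n$. Applying valuativity then produces a linear recursion for $g_{U_{n,d}}(t)$ in terms of $g$-polynomials of smaller uniform matroids.

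Third, I would rewrite the proposed coefficient in the cleaner form $\binom{n-i-1}{d-i}\binom{n-d-1}{i-1}$, which equals $(n-i-1)!/((d-i)!(n-d-i)!(i-1)!)$, and verify by induction on $n$ that this expression satisfies the recursion obtained in step two. This reduces to a Vandermonde-Chu-type binomial identity which can be checked directly.

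The main obstacle is choosing a subdivision whose pieces are genuinely amenable to induction, since uniform matroids admit only rather rigid matroid polytope subdivisions. Speyer's original argument likely bypasses this difficulty by using a chain-of-quotients description of $g_M(t)$, in which the coefficient of $t^i$ counts certain flags of matroid quotients; for uniform matroids such flags reduce to pairs of nested subsets of $[n]$, and the binomial product $\binom{n-i-1}{d-i}\binom{n-d-1}{i-1}$ drops out directly from this count without any need for the valuative recursion.
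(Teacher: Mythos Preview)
The paper under review does not prove this proposition at all: it is quoted verbatim as \cite[Proposition 10.1]{Speyer2009} and used as a starting point for everything that follows. There is therefore no ``paper's own proof'' to compare your proposal against; the proof lives in Speyer's original article, not here.

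As for your proposal itself, it is a sketch with an acknowledged hole rather than a proof. You correctly identify the structural constraints (support in degrees $1\le i\le \min(d,n-d)$, vanishing on disconnected matroids), and the rewriting $\binom{n-i-1}{d-i}\binom{n-d-1}{i-1}$ is exactly the form the present paper uses. But your second step is not carried out: you say you would subdivide $\Delta_{n,d}$ and obtain a recursion, yet you neither specify the subdivision nor the recursion, and you then concede that finding a workable subdivision is ``the main obstacle.'' Your final paragraph pivots to a different idea (a flag-counting interpretation) and speculates that Speyer ``likely'' used it. Speculation about what another author probably did is not a proof. If you want a self-contained argument, either produce the explicit subdivision and verify the resulting recursion against the binomial expression, or commit to the flag/quotient description and actually carry out the enumeration.
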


Recently, Zhang and Zhao \cite{Zhang-Zhao-2024} showed the real-rootedness of the polynomial $g_{U_{n,d}}(t)$ and asymptotic normality of the coefficient of $g_{U_{n,[n/2]}}(t)$. This motivated us to explore more properties of this polynomial.

The objective of this paper is to prove the infinite log-concavity and higher order Tur\'{a}n inequality of the sequence $\{g_{U_{n,d}}(t)\}_{d=1}^{n-1}$ for any given $t>0$ and $n\ge 4$ or $5$. Note that a polynomial $f(x)$ is usually called log-concave if its coefficient sequence is log-concave, and $f(x)$ is also called infinitely log-concave if its coefficient sequence enjoys this property. The main results obtained in the present paper is different from those in \cite{Zhang-Zhao-2024}. The real-rootedness of $g_{U_{n,d}}(t)$ in $t$ only implies the infinite log-concavity of the coefficient of $g_{U_{n,d}}(t)$. While in this paper, we show the infinite log-concavity of sequence $\{g_{U_{n,d}}(t)\}_{d=1}^{n-1}$.

A sequence $\{a_n\}_{n\geq 0}$ of real numbers is said to be \emph{log-concave} if $a_n^2-a_{n-1}a_{n+1}\geq 0$ for $n\geq 1$. This inequality is also called the \emph{Tur\'{a}n inequality} \cite{Szego1948, Turan} or the \emph{Newton's inequality} \cite{Craven-Csordas1989, CNV1986, Niculescu2000}.
A polynomial is said to be log-concave if the sequence of its coefficients is log-concave, see Brenti \cite{Brenti} and Stanley \cite{Stanley}.

For a sequence $\{a_n\}_{n \ge 0}$, define an operator $\mathfrak{L}$ by $\mathfrak{L}(\{a_n\}_{n\ge 0})=\{b_n\}_{n\ge 0}$, where $b_n=a_n^2-a_{n-1}a_{n+1}$ for $n\ge 0$, with the convention that $a_{-1}=0$. A sequence $\{a_n\}_{n\ge 0}$ is said to be $k$-log-concave if the sequence $\mathfrak{L}^j\left(\{a_n\}_{n\ge 0}\right)$ is nonnegative for each $1\le j\le k$, and $\{a_n\}_{n\ge 0}$ is said to be $\infty$-log-concave if $\mathfrak{L}^k\left(\{a_n\}_{n\ge 0}\right)$ is nonnegative for any $k\ge 1$.

The notion of infinite log-concavity was introduced by Boros and Moll \cite{Boros-Moll-2004}. They also conjectured that a sequence arose from their study of evaluation of a quartic integral, named $\{d_\ell(m)\}_{\ell=0}^m$, is infinitely log-concave, which is still open.
This conjecture has inspired many interesting works. For example,
Br\"{a}nd\'{e}n \cite{Branden2011} proved the following result and hence confirmed a conjecture independently posed by Fisk \cite{Fisk}, McNamara and Sagan \cite{McNamara-Sagan} and Stanley; see \cite{Branden2011}.
\begin{theorem}\cite{Branden2011}\label{thm:Branden}
If a polynomial $\sum_{k=0}^n a_k x^k$ has only real and negative zeros, then so does the polynomial $\sum_{k=0}^n (a_k^2-a_{k-1}a_{k+1}) z^k$ with the convention that $a_{-1}=a_{n+1}=0$. In particular, the sequence $\{a_k\}_{k=0}^n$ is infinitely log-concave.
\end{theorem}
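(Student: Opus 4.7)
The ``in particular'' conclusion is a short deduction from the polynomial statement: a polynomial with all zeros real and negative has coefficients of the same sign (all positive when the leading coefficient is), so once one knows that the map $p \mapsto \mathfrak{L}(p)$ sends the class of such polynomials into itself, an immediate induction yields $\mathfrak{L}^k(\{a_k\}_{k=0}^n) \ge 0$ for every $k \ge 1$, which is infinite log-concavity. Hence the heart of the matter is to show that $q(z) := \sum_{k} (a_k^2 - a_{k-1}a_{k+1})\, z^k$ has only real negative zeros whenever $p(x) = \sum_k a_k x^k$ does.

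My plan is to exploit the machinery of real stable polynomials. First form the bivariate polynomial
\[
P(x, y) := p(x)\, p(y) = \sum_{i, j} a_i a_j\, x^i y^j,
\]
which is real stable as a product of univariate real-rooted polynomials. Next introduce the linear operator $T : \mathbb{R}[x, y]_{(n, n)} \to \mathbb{R}[z]_{\le n}$ defined on monomials by
\[
T(x^i y^j) = \begin{cases} z^i, & i = j, \\ -\tfrac12\, z^{(i + j)/2}, & |i - j| = 2, \\ 0, & \text{otherwise.} \end{cases}
\]
A direct expansion then verifies $T(P) = q$, the factor $-\tfrac12$ absorbing the two symmetric contributions from $(i,j) = (k-1, k+1)$ and $(k+1, k-1)$. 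If one can show that $T$ preserves real stability, then $q$ is a real-rooted polynomial in $z$; combined with the nonnegativity of its coefficients (Newton's inequality applied to $p$), this forces all zeros of $q$ to be real and nonpositive.

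The main obstacle is verifying that $T$ is indeed a stability preserver. For this I would invoke the Borcea--Br\"{a}nd\'{e}n characterization of stability-preserving operators on polynomials of bounded degree: such a $T$ is a stability preserver if and only if its algebraic symbol
\[
T\bigl[(x + u)^n (y + v)^n\bigr] \in \mathbb{R}[z, u, v]
\]
is real stable. The problem thus reduces to establishing stability of one explicit trivariate polynomial, which one hopes to handle by an explicit factorization, by a Hurwitz-type criterion, or by a limiting argument from manifestly stable polynomials. As a fallback one could attempt induction on $n$ using $p(x) = (x + r)\, p_0(x)$; writing $c_k$ for the coefficients of $p_0$ and $d_k := c_k^2 - c_{k-1}c_{k+1}$, one obtains the identity $b_k = d_{k-1} + r^2 d_k + r\,(c_{k-1} c_k - c_{k-2} c_{k+1})$, which shows why the induction is delicate: the mixed cross terms $c_{k-1} c_k - c_{k-2} c_{k+1}$ are not themselves Newton differences of a single sequence and would need to be controlled via the interlacing properties of the shifted sequences $(c_k)$ and $(c_{k-1})$.
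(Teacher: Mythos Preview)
This theorem is quoted in the paper as a result of Br\"{a}nd\'{e}n and is \emph{not} proved there; it appears only as a cited tool, invoked verbatim in the one-line proof of Theorem~\ref{thm:g-poly-inflog}. There is therefore no proof in the paper to compare your attempt against.

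As for the merits of your sketch on its own: the reduction of the ``in particular'' clause to the polynomial statement is correct, and framing the problem via the Borcea--Br\"{a}nd\'{e}n characterization of linear stability preservers is a natural and, in fact, the route Br\"{a}nd\'{e}n himself takes. But your proposal stops precisely at the point where the actual content lies. You define the operator $T$ and note that the problem reduces to checking that the symbol $T[(x+u)^n(y+v)^n]$ is real stable, and then say only that ``one hopes to handle'' this by some unspecified factorization, Hurwitz criterion, or limiting argument. That verification is the entire difficulty; without it you have a plan, not a proof. (Your fallback induction on $n$ via $p(x)=(x+r)p_0(x)$ runs into exactly the obstacle you identify --- the cross terms $c_{k-1}c_k-c_{k-2}c_{k+1}$ are not controlled by the inductive hypothesis alone --- and there is no indication of how to close that gap either.) If you want to pursue this, you should consult Br\"{a}nd\'{e}n's paper directly for the concrete computation of the symbol; the point is that it does not drop out for free.
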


The first main result of this paper is as follows.

\begin{theorem}\label{thm:g-poly-inflog}
Given $t>0$, the sequence $\{g_{U_{n,d}}(t)\}_{d=1}^{n-1}$ is infinitely log-concave for $n\ge 4$.
\end{theorem}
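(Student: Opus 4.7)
The plan is to apply Br\"and\'en's Theorem~\ref{thm:Branden} to the ordinary generating polynomial
\[
h_n(x;t) = \sum_{d=1}^{n-1} g_{U_{n,d}}(t)\, x^d.
\]
Since $g_{U_{n,1}}(t)=t$, one has $h_n(0;t)=0$, so $\widetilde{h}_n(x;t):=h_n(x;t)/x$ is a polynomial of degree $n-2$ whose coefficient sequence is precisely $\{g_{U_{n,d}}(t)\}_{d=1}^{n-1}$. What I need is that $\widetilde{h}_n(x;t)$ has only real zeros for $t>0$: these are then automatically negative, because the coefficients of $\widetilde{h}_n$ are positive and the constant term $g_{U_{n,1}}(t)=t$ is nonzero. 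Theorem~\ref{thm:Branden} applied iteratively to $\widetilde{h}_n(x;t)$ then yields the infinite log-concavity of $\{g_{U_{n,d}}(t)\}_{d=1}^{n-1}$. Hence the entire problem reduces to proving that $h_n(x;t)$ is real-rooted in $x$.

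To prove real-rootedness, I would start from~\eqref{g-uni-dn(t)}, interchange the order of summation, and set $j=d-i$ so that the inner sum collapses via the binomial theorem to
\[
h_n(x;t) = \sum_{i=1}^{\lfloor n/2\rfloor} \binom{n-i-1}{i-1} (tx)^i (1+x)^{n-2i}.
\]
Introducing the rational substitution $u=tx/(1+x)^2$ rewrites this as
\[
h_n(x;t) = (1+x)^n\, u\, q_{n-2}(u), \qquad q_m(u):=\sum_{j=0}^{\lfloor m/2\rfloor} \binom{m-j}{j}\, u^j,
\]
where $q_m$ is a Fibonacci-type polynomial obeying $q_{m+1}=q_m+u\,q_{m-1}$ and admitting the Binet form $q_m(u)=(\alpha^{m+1}-\beta^{m+1})/(\alpha-\beta)$ with $\alpha,\beta$ the roots of $z^2-z-u=0$. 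For $u\le -1/4$ one parametrizes $\alpha=\sqrt{-u}\,e^{i\theta}$ with $\cos\theta=1/(2\sqrt{-u})$, and $q_{n-2}(u)=0$ becomes $\sin((n-1)\theta)=0$; this produces exactly $\lfloor n/2\rfloor-1=\deg q_{n-2}$ distinct real zeros $u_0\in(-\infty,-1/4)$.

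The remaining, and main technical, step is to pull these zeros back to the $x$-variable and account for every zero of $h_n$. For each real negative $u_0$, the quadratic $u_0(1+x)^2=tx$ has discriminant $t(t-4u_0)>0$, product of roots equal to $1$, and sum $(t-2u_0)/u_0<0$; it therefore contributes two real negative zeros of $h_n$. Together with the root at $x=0$ coming from the factor $u$, this yields $2(\lfloor n/2\rfloor-1)+1=2\lfloor n/2\rfloor-1$ zeros; when $n$ is odd the smallest exponent of $(1+x)$ in the factored formula equals $1$, so $(1+x)\mid h_n(x;t)$ and one obtains the missing root $x=-1$. This accounts for all $n-1$ zeros and confirms that $\widetilde{h}_n(x;t)$ is real-rooted with only negative zeros for $t>0$, after which Theorem~\ref{thm:Branden} closes the argument. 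The hardest part is the bookkeeping in this pull-back step—keeping track of the parity of $n$, the distinctness of the preimages under $u=tx/(1+x)^2$, and the fact that no $u_0$ produces the excluded value $x=-1$—but the positivity of $t$ and the location of the $u_0$ in $(-\infty,-1/4)$ make each of these verifications clean.
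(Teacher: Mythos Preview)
Your proposal is correct, and at the top level it follows the same route as the paper: prove that $h_n(x;t)$ is real-rooted for $t>0$ and then invoke Br\"and\'en's Theorem~\ref{thm:Branden}. The genuine difference lies in how real-rootedness is established. The paper proves the three-term recurrence $h_{n+2}(x;t)=(1+x)h_{n+1}(x;t)+tx\,h_n(x;t)$ (via holonomic methods) and then applies the Liu--Wang interlacing criterion inductively; this yields not just real-rootedness but the stronger interlacing $h_n\preceq h_{n+1}$, which the paper uses elsewhere. Your argument instead derives the explicit $\gamma$-expansion $h_n(x;t)=\sum_{i}\binom{n-i-1}{i-1}(tx)^i(1+x)^{n-2i}$ directly (the paper obtains the same expansion later, in Theorem~\ref{thm:gamma-p-gefgp}, but only from the recurrence and without exploiting it for real-rootedness), factors through the Fibonacci-type polynomials $q_m$ via $u=tx/(1+x)^2$, and locates all zeros explicitly from the Binet/trigonometric form. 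Your approach is more elementary and self-contained---no Liu--Wang machinery, and explicit zero locations---at the cost of the parity bookkeeping you mention and of not immediately giving interlacing. One small remark: the step ``the inner sum collapses via the binomial theorem'' actually uses the trinomial revision $\binom{n-i-1}{j}\binom{n-i-1-j}{i-1}=\binom{n-i-1}{i-1}\binom{n-2i}{j}$ before the binomial theorem applies; this is routine but worth stating.
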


A real sequence $\{a_n\}_{n\geq 0}$ is said to satisfy the \emph{higher order Tur\'{a}n inequality} or cubic Newton inequality if for $n\geq 1$,
\begin{align}\label{eq:ho-Turan}
 4(a_n^2-a_{n-1}a_{n+1})(a_{n+1}^2-a_n a_{n+2})
-(a_{n} a_{n+1}-a_{n-1} a_{n+2})^2 \geq 0,
\end{align}
see \cite{Dimitrov1998, Niculescu2000, Rosset1989}.
The Tur\'{a}n inequality and the higher order Tur\'{a}n inequality are related to the Laguerre-P\'{o}lya class of real entire functions, see \cite{Dimitrov1998, Szego1948}.
A real entire function $\psi(x)=\sum_{n=0}^{\infty} a_n \frac{x^n}{n!}$ is said to belong to the Laguerre-P\'{o}lya class, denoted by $\psi \in \mathcal{L}$-$\mathcal{P}$, if
$\psi(x)=c x^m e^{-\alpha x^2+\beta x}\prod_{k=1}^{\infty}(1+x/x_k)e^{-x/x_k},$ where $c,\beta,x_k$ are real, $\alpha\geq 0$, $m$ is a nonnegative integer and $\sum x_k^{-2}<\infty$.
Jensen \cite{Jensen} proved that a real entire function $\psi(x)\in \mathcal{L}$-$\mathcal{P}$ if and only if for any integer $n>0$, the $n$-th associated Jenson polynomial
$
J_n(x)=\sum_{k=0}^n \binom{n}{k} a_k x^k
$
are hyperbolic, i.e., $J_n(x)$ has only real zeros.
This result was also obtained by P\'{o}lya and Schur \cite{Polya-Schur1933}.
Besides, if a real entire function $\psi \in \mathcal{L}$-$\mathcal{P}$, then its Maclaurin coefficients are log-concave, see \cite{Craven-Csordas1989, Csordas-Varga1990}.
Moreover, Dimitrov \cite{Dimitrov1998} proved that the higher order Tur\'{a}n inequality \eqref{eq:ho-Turan}, an extension of Tur\'{a}n inequality, is also a necessary condition for  $\psi \in \mathcal{L}{\textrm{-}}\mathcal{P}$.

The $\mathcal{L}$-$\mathcal{P}$ class has close relation with the Riemann hypothesis. Let $\zeta$ and $\Gamma$ denote the Riemann zeta function and the gamma function, respectively. The Riemann $\xi$-function is defined by
$
\xi(iz)=\frac{1}{2}\left(z^2-\frac{1}{4}\right)\pi^{-\frac{z}{2}-\frac{1}{4}}
  \Gamma\left(\frac{z}{2}+\frac{1}{4}\right)
  \zeta\left(z+\frac{1}{2}\right)
$,
see Boas \cite{Boas1954}.
It is well known that the Riemann $\xi$-function is an entire function of order $1$ and can be restated as
\begin{equation}\label{eq:R-xi-r}
 \frac{1}{8}\,\xi\!\left(\frac{x}{2}\right)
=\sum\limits_{k=0}^{\infty}(-1)^k\, \hat{b}_k\, \frac{x^{2k}}{(2k)!},
\end{equation}
where
$\hat{b}_k=\int_{0}^{\infty} t^{2t} \Phi(t) dt$
and
$\Phi(t)=\sum_{n=0}^\infty (2n^4 \pi^2 e^{9t} - 3n^2 \pi e^{5t}) \exp(-n^2 \pi e^{4t})$,
see P\'{o}lya \cite{Polya1927}. Set $z=-x^2$ in \eqref{eq:R-xi-r}. Then one obtain an entire function of order $1/2$, denoted by $\xi_1(z)$, that is,
\begin{equation*}
\xi_1(z)=\sum_{k=0}^\infty \frac{k!}{(2k)!}\hat{b}_k \frac{z^k}{k!}.
\end{equation*}
So the Riemann hypothesis holds if and only if $\xi_1(z)\in \mathcal{L}$-$\mathcal{P}$. See \cite{CJW2019, CNV1986, Dimitrov1998} for more details.

For the deep relation stated above, the higher order Tur\'{a}n inequality has received great attention. Many interesting sequences were showed to satisfy the higher order Tur\'{a}n inequality \eqref{eq:ho-Turan}. For instance, the Riemann $\xi$-function was proved to satisfy \eqref{eq:ho-Turan} by Dimitrov and Lucas \cite{DL2011}.
By using the Hardy-Ramanujan-Rademacher formula, Chen, Jia and Wang \cite{CJW2019} proved that the partition function $p(n)$ satisfies the higher order Tur\'{a}n inequality for $n\geq 95$ and the $3$-rd associated Jensen polynomials $\sum_{k=0}^3 \binom{3}{k} p(n+k)x^k$ have three distinct zeros, and hence confirm a conjecture of Chen \cite{Chen2017}.
Griffin, Ono, Rolen and Zagier \cite{GORZ2019} showed that Jensen polynomials for a large family of functions, including those associated to $\zeta(s)$ and the partition function $p(n)$, are hyperbolic for sufficiently large $n$, which gave evidence for Riemann hypothesis. For more results on this topic, please see \cite{Wang2019, Wagner2022, Wang-Zhang2024, Dong-Ji2024} and the literature cited therein.

The second main result of this section is as follows.

\begin{theorem}\label{thm:g-poly-hoTi}
Given $t>0$, the sequence $\{g_{U_{n,d}}(t)\}_{d=1}^{n-1}$ satisfies the higher order Tur\'{a}n inequality \eqref{eq:ho-Turan} for $n\ge 5$.
That is, for any $t>0$ and $n\ge 5$, we have
\begin{align}\label{ieq:h-T-gud}
 &\ 4\left(g_{U_{n,d}}(t)^2-g_{U_{n,d-1}}(t) g_{U_{n,d+1}}(t)\right)
      \left(g_{U_{n,d+1}}(t)^2-g_{U_{n,d}}(t) g_{U_{n,d+2}}(t)\right)\\
&\qquad \ge \left(g_{U_{n,d}}(t) g_{U_{n,d+1}}(t)-g_{U_{n,d-1}}(t) g_{U_{n,d+2}}(t)\right)^2,\nonumber
\end{align}
for $2\le d \le n-3$.
\end{theorem}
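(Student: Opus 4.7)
The plan is to derive inequality \eqref{ieq:h-T-gud} as a consequence of the real-rootedness of the generating polynomial $h_n(x;t) = \sum_{d=1}^{n-1} g_{U_{n,d}}(t)\, x^d$, which will have been established in a preceding section of the paper for every $t > 0$. Since its coefficients are strictly positive for $t > 0$ and its support starts at $x$ rather than $1$, the polynomial $h_n(x;t)/x$ has only real negative zeros, so the sequence $\{g_{U_{n,d}}(t)\}_{d=1}^{n-1}$ is a P\'olya frequency sequence for each $t>0$.

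First, I would reduce \eqref{ieq:h-T-gud} to a discriminant condition. Abbreviating $a_k := g_{U_{n,d-1+k}}(t)$ for $k=0,1,2,3$---the constraints $n\ge 5$ and $2\le d \le n-3$ ensuring all four indices lie in the valid range $\{1,\ldots,n-1\}$---the inequality becomes the four-term form $4(a_1^2 - a_0 a_2)(a_2^2 - a_1 a_3) \ge (a_1 a_2 - a_0 a_3)^2$. A routine expansion shows that the difference of the two sides equals $\tfrac{1}{27}$ times the discriminant of the cubic Jensen polynomial $J_{n,d}(x;t) := a_0 + 3 a_1 x + 3 a_2 x^2 + a_3 x^3$, so \eqref{ieq:h-T-gud} is equivalent to the hyperbolicity of $J_{n,d}(x;t)$, an equivalence used for instance in \cite{DL2011,CJW2019}.

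Next, I would deduce the hyperbolicity of $J_{n,d}(x;t)$ from that of $h_n(x;t)$. Factoring $h_n(x;t) = t\,x\,\prod_{i=1}^{n-2}(x + r_i(t))$ with each $r_i(t) > 0$, one has $g_{U_{n,d}}(t) = t\,e_{n-1-d}\bigl(r_1(t),\ldots,r_{n-2}(t)\bigr)$, and the overall factor $t^4$ in \eqref{ieq:h-T-gud} cancels to give an equivalent higher order Tur\'an inequality for four consecutive elementary symmetric polynomials in positive reals. This is a classical strengthening of Newton's inequalities that follows from the general principle, implicit in Pólya--Schur theory \cite{Polya-Schur1933} and exploited repeatedly in the recent work \cite{CJW2019,Wang-Zhang2024,Dong-Ji2024}, that the binomially weighted four-term local window of the coefficient sequence of a hyperbolic polynomial with same-sign roots is itself a hyperbolic cubic. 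Applied to the coefficient sequence of $h_n(x;t)$, this principle yields the hyperbolicity of $J_{n,d}(x;t)$ and hence \eqref{ieq:h-T-gud}.

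The main obstacle in the entire argument is not the reduction above but the prior task of establishing the real-rootedness of $h_n(x;t)$ for every $t > 0$; this is the technical heart of the paper, to be approached presumably by induction on $n$ using a linear recurrence for the family $h_n(x;t)$ together with an interlacing criterion such as the Hermite--Kakeya--Obreshkoff theorem. Once that is in place, Theorem \ref{thm:g-poly-hoTi} follows as a formal corollary via the discriminant identification sketched above, exactly as Theorem \ref{thm:g-poly-inflog} will follow from Br\"and\'en's Theorem \ref{thm:Branden}.
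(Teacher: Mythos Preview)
Your proposal is correct and follows essentially the same route as the paper: both derive the inequality from the real-rootedness of $h_n(x;t)$ (Theorem~\ref{thm:rz-mfgnx}) via the principle that real-rootedness of a polynomial with nonnegative coefficients forces the higher order Tur\'an inequality for its coefficient sequence. The paper invokes this principle as a black box by citing the Wang--Zhang criterion (Theorem~\ref{thm:Wang-Zhang}, extending Ma\v{r}\'{i}k's theorem), whereas you unpack its mechanism through the discriminant identification with the cubic Jensen polynomial; this is a presentational rather than a substantive difference.
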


In order to prove Theorems \ref{thm:g-poly-inflog} and \ref{thm:g-poly-hoTi}, we investigate the real-rootedness of the following generating function of Speyer's $g$-polynomial
\begin{align}\label{eq:ex-g-poly-defi}
 h_n(x;t):
=\sum_{d=1}^{n-1}g_{U_{n,d}}(t) x^d,
\end{align}
where $t$ is treated as a nonzero parameter. Clearly, $h_n(x;t)$ is a polynomial in $x$ with leading term $tx^{n-1}$.
If $h_n(x;t)$ has only real zeros, then we can derive Theorem \ref{thm:g-poly-inflog} from Theorem \ref{thm:Branden} of Br\"{a}nd\'{e}n. As will be seen, the real-rootedness of $h_n(x;t)$ also plays a key role in the proof of Theorem \ref{thm:g-poly-hoTi}.

For notational convenience, let $g_{n,d}(t)=g_{U_{n,d}}(t)$ and $S_i(n,d)=[t^i]g_{n,d}(t)$, the coefficient of $t^i$ in $g_{n,d}(t)$. From \eqref{g-uni-dn(t)} it immediately follows that $g_{n,d}(t)= g_{n,n-d}(t)$.
So the polynomial $h_n(x;t)$ is symmetric for any $n\ge 3$.
Clearly,
$
S_i(n,d)=\frac{(n-i-1)!}{(d-i)!(n-d-i)!(i-1)!}=\binom{n-i-1}{d-i}\binom{n-d-1}{i-1}.
$
Since the binomial coefficient $\binom{n}{k}=0$ for $k>n$, we can rewrite \eqref{eq:ex-g-poly-defi} as
\begin{align}\label{eq:ex-g-poly-defi2}
 h_n(x;t):
=\sum_{d=1}^{n-1}\sum_{i=1}^{d}
  \binom{n-i-1}{d-i}\binom{n-d-1}{i-1}\, t^i x^d.
\end{align}

The remainder of this paper is organized as follows. In Section \ref{Sec:2}, we first obtain several recurrence relations of $h_n(x;t)$ by using a computer algebraic system  developed by Koutschan \cite{Koutschan2009}.
With one of these recurrences and a result due to Liu and Wang \cite{LiuWang}, we show that the polynomial sequence $\{h_n(x;t)\}_{n\ge 2}$ has interlacing zeros for $t>0$, in Section \ref{Sec:3}, and hence obtain the real-rootedness of $h_n(x;t)$ for $t>0$ and $n\ge 2$. In Section \ref{sec:proof}, we complete the proofs of Theorems \ref{thm:g-poly-inflog} and \ref{thm:g-poly-hoTi}.
The number $g_{n,d}(1)$ possesses specific geometric interpretations which will also be stated in detail in Section \ref{sec:5}.
It turns out that the polynomial $h_n(x;t)$ enjoys more properties. In Sectin \ref{sec:g-p}, we prove the $\gamma$-positivity of $h_n(x;t)$.
In Section \ref{Sec:asynor}, we show that the coefficient of $h_n(x;t)$, namely the Speyer's $g$-polynomial $g_{n,d}(t)$, is asymptotically normal by local and central limit theorems based on a criterion given by Bender \cite{Bender} and Harper \cite{Harper}.
Finally, in Section \ref{sec:L}, we give Laguerre inequalities related to $g_{n,d}(t)$ and $h_n(x;t)$ and conclude this paper with a conjecture.

\section{Recurrence Relations}\label{Sec:2}

The aim of this section is to show some recurrence relations for the generating function of Speyer's $g$-polynomial $h_n(x;t)$. As will be seen, these recurrences play an important role in the proofs of the real-rootedness, the $\gamma$-positivity, and the asymptotic normality presented in Sections \ref{Sec:3}, \ref{sec:g-p}, and \ref{Sec:asynor}, respectively. And the real-rootedness of $h_n(x;t)$ will lead to Theorems \ref{thm:g-poly-inflog} and \ref{thm:g-poly-hoTi}.
The main result of this section is as follows.

\begin{theorem}\label{rec:shiftn}
Let $h_n(x;t)$ be a function of $x$ defined by \eqref{eq:ex-g-poly-defi2}. Then for $n\ge 2$, we have
\begin{align}
 h_{n+2}(x;t)&\,
=(1+x)h_{n+1}(x;t)+txh_n(x;t),\label{rec:gfhx-n2}\\
 A\, h_{n+1}(x;t)&\,
=B\, h_{n}(x;t)+C\, h_{n}'(x;t),\label{rec:gfhx-nx'}\\
 \widetilde{A}\, h_{n+1}(x;t)&\,
=\widetilde{B}\, h_{n}(x;t)+C^2\, h_n''(x;t),\label{rec:gfhx-nx''}
\end{align}
where
\begin{align*}
 A = &\, (n-1)(x-1),\\
 B = &\, -n(2tx+x+1),\\
 C = &\, x(x^2+4tx+2x+1),
\end{align*}
and
\begin{align*}
 \widetilde{A}
= &\, (n-1)\big((n-2)x^3+(4nt+n-2t+2)x^2-(4nt+n-6t-4)x-n\big),\\
 \widetilde{B}
= &\, -n\big((nt-5t-2)x^3+(4nt^2+2nt-8t^2+n-6t-5)x^2\\
    &\qquad\qquad\qquad +(5nt+2n-9t-4)x+n-1\big).
\end{align*}
\end{theorem}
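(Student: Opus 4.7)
The plan is to establish the three recurrences in cascade: \eqref{rec:gfhx-n2} combinatorially via Pascal's rule, \eqref{rec:gfhx-nx'} through the rational generating function of $h_n(x;t)$, and \eqref{rec:gfhx-nx''} as an algebraic consequence of the first two. Since the summand defining $h_n(x;t)$ is a proper hypergeometric term in $(n,d,i)$, all three identities also admit automatic certification via Koutschan's HolonomicFunctions package, which the author uses; the plan below is a self-contained alternative.

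For \eqref{rec:gfhx-n2}, I would extract the coefficient of $t^ix^d$ on each side. With $S_i(n,d)=\binom{n-i-1}{d-i}\binom{n-d-1}{i-1}$, the claim reduces to
\[ S_i(n+2,d) = S_i(n+1,d) + S_i(n+1,d-1) + S_{i-1}(n,d-1). \]
Applying Pascal's rule to the two factors $\binom{n+1-i}{d-i}$ and $\binom{n+1-d}{i-1}$ of $S_i(n+2,d)$ produces four cross-terms; applying Pascal's rule once more inside the factor $\binom{n+1-d}{i-1}$ appearing in $S_i(n+1,d-1)$ on the right-hand side expands that side into the same four terms.

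For \eqref{rec:gfhx-nx'}, introduce $H(x,y,t) := \sum_{n\ge 0} h_n(x;t)y^n$, noting that $h_0 = h_1 = 0$ and $h_2 = tx$. Combining \eqref{rec:gfhx-n2} with these initial values yields the closed form
\[ H(x,y,t) = \frac{txy^2}{1-(1+x)y-txy^2}. \]
Setting $P := 1-(1+x)y-txy^2$, direct differentiation gives the compact expressions $H_y = txy(2-(1+x)y)/P^2$ and $H_x = ty^2(1-y)/P^2$. Now translate \eqref{rec:gfhx-nx'} into a generating-function identity: one has $\sum_n (n-1)h_{n+1}y^n = (yH_y - 2H)/y$, $\sum_n n h_n y^n = yH_y$, and $\sum_n h_n'(x;t)y^n = H_x$, so \eqref{rec:gfhx-nx'} is equivalent to
\[ (x-1)(yH_y - 2H) + (2tx+x+1)y^2 H_y - x(x^2+4tx+2x+1)y H_x = 0. \]
After multiplying through by $P^2/y^3$, this becomes a polynomial identity in $x, y, t$, verified by expansion and comparison of coefficients of powers of $y$.

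For \eqref{rec:gfhx-nx''}, I would argue purely algebraically from the first two. Shifting \eqref{rec:gfhx-nx'} to index $n+1$ (noting that $C$ is independent of $n$) and substituting $h_{n+2} = (1+x)h_{n+1} + tx h_n$ from \eqref{rec:gfhx-n2} produces $Ch_{n+1}'$ as a polynomial combination of $h_{n+1}$ and $h_n$. Separately, differentiating \eqref{rec:gfhx-nx'} in $x$, multiplying by $C$, and using \eqref{rec:gfhx-nx'} to replace $Ch_n'$ by $Ah_{n+1} - Bh_n$, one expresses $ACh_{n+1}'$ as a polynomial combination of $h_{n+1}$, $h_n$, and $C^2 h_n''$. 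Eliminating $ACh_{n+1}'$ between the two expressions yields the desired relation $\widetilde{A}h_{n+1} = \widetilde{B}h_n + C^2 h_n''$, with the stated $\widetilde{A}$ and $\widetilde{B}$ recovered by a direct polynomial comparison. The main obstacle in the whole plan is \eqref{rec:gfhx-nx'}: \eqref{rec:gfhx-n2} and \eqref{rec:gfhx-nx''} are essentially routine once \eqref{rec:gfhx-nx'} is available, but \eqref{rec:gfhx-nx'} itself requires either the rational-function calculation above, a Zeilberger-type certificate, or Koutschan's package to carry the computational load.
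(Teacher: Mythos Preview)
Your proposal is correct and takes a genuinely different route from the paper. The paper establishes all three recurrences uniformly by computer algebra: it loads Koutschan's \texttt{HolonomicFunctions} package, calls \texttt{Annihilator} on the double sum \eqref{eq:ex-g-poly-defi2} to obtain a Gr\"{o}bner basis of the annihilating left ideal in the Ore algebra generated by $S_n$ and $D_x$, and then verifies each of \eqref{rec:gfhx-n2}--\eqref{rec:gfhx-nx''} by reducing the corresponding Ore polynomial to zero via \texttt{OreReduce}. There is no separate combinatorial or generating-function argument.

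Your approach, by contrast, is layered and human-readable: Pascal's rule for \eqref{rec:gfhx-n2}, the closed rational form $H=txy^2/(1-(1+x)y-txy^2)$ for \eqref{rec:gfhx-nx'}, and elimination of $h_{n+1}'$ for \eqref{rec:gfhx-nx''}. This buys independence from specialized software and gives some structural insight (e.g., the rational $H$ makes the later asymptotic-normality computations in Section~\ref{Sec:asynor} transparent as well). The cost is that the verification of \eqref{rec:gfhx-nx'} still comes down to checking a polynomial identity in $x,y,t$ by hand, and the derivation of $\widetilde{A},\widetilde{B}$ in \eqref{rec:gfhx-nx''} via elimination is mechanical but tedious; the paper's single \texttt{OreReduce} call handles these uniformly without bespoke manipulation. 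Your remark that the summand is proper hypergeometric and hence amenable to automatic certification is exactly how the paper proceeds.
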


\begin{proof}
Since the coefficients of the recurrence \eqref{rec:gfhx-nx''} looks a little complicated, a symbolic proof based on a computer algebra system  will be more concise. We shall apply the package {\tt HolonomicFunctions\footnote{See https://www3.risc.jku.at/research/combinat/software/ergosum/RISC/HolonomicFunctions.html.}} for {\tt Mathematica} developed by Koutschan \cite{Koutschan2009} to prove Theorem \ref{rec:shiftn}. Let us first load this package.
\begin{mma}
\In  << |RISC'HolonomicFunctions'| \\
\Print HolonomicFunctions Package version 1.7.3 (21-Mar-2017)\\
\Print written by Christoph Koutschan\\
\Print Copyright Research Institute for Symbolic Computation (RISC),\\
\Print Johannes Kepler University, Linz, Austria\\
\end{mma}%

\noindent Then run the command Annihilator[$expr$, $ops$] which computes annihilating relations for $expr$ with respect to the Ore operator(s) $ops$. In the following input, $expr=h_n(x;t)$ and $ops$ consists of S$[n]$ and Der$[x]$, where S$[n]$ stands for the forward shift operator such that S$[n]h_n(x;t)=h_{n+1}(x;t)$ and Der$[x]$ represents the operator partial derivative with respect to $x$.
\begin{mma}\MLabel{MMA:9}
\In |ann = Annihilator|[|Sum|[|Sum|[|Binomial|[n - i - 1, d - i]|Binomial|[n - d - 1, i - 1] t^i,\linebreak \{i, 1,
      d\}] x^d, \{d, 1, n - 1\}], \{|S|[n], |Der|[x]\}]\\

\Out \{(1 - n - x + n x) S_n+(-x - 2 x^2 - 4 t x^2 - x^3) D_x + (n + n x + 2 n t x),\linebreak
    (x^2+x^3+4tx^3-x^4-4tx^4-x^5)D_x^2 + (-nx+4x^2-nx^2+6tx^2 - 4ntx^2 + 2x^3 + nx^3 - 2tx^3\linebreak
     + 4ntx^3 - 2x^4 + n x^4)D_x + (n - 3 n x + n^2 x - n t x + n^2 t x - n^2 x^2 - n t x^2 - n^2 t x^2)\}\\
\end{mma}

The output \refOut{\ref{MMA:9}} gives a list of Ore Polynomial expressions which form a Gr\"{o}bner basis of an annihilating left ideal for $h_n(x;t)$, where $S_n={\rm S}[n]$ and $D_{\!x}={\rm Der}[x]$. For more information of the Ore algebra and Ore polynomial, please see Ore \cite{Ore1933} and Koutschan \cite{Koutschan2009}.

Next, we employ the command OreReduce[$opoly$, $ann$] which reduces the Ore polynomial $opoly$ modulo the Gr\"{o}bner basis $ann$.
In the following input, $opoly:={\rm S}[n]^2 -(x+1){\rm S}[n] - t x$ which corresponds to the Ore polynomial of recurrence \eqref{rec:gfhx-n2} and $ann$ is given by \refOut{\ref{MMA:9}}.
\begin{mma}
\In |OreReduce|[|S|[n]^2 - (x+1) |S|[n] - t x, |ann|]\\

\Out 0\\
\end{mma}
The return is zero, which means that $opoly$ is contained in the left ideal generated by the elements of $ann$, and hence $\big({\rm S}[n]^2 -(x+1){\rm S}[n]  -t x\big)h_n(x;t)=0$, that is, the recurrence \eqref{rec:gfhx-n2} holds.

To prove \eqref{rec:gfhx-nx'}, set $opoly:=(n-1)(x-1){\rm S}[n] -x(x^2+4tx+2x+1){\rm Der}[x] + n(2tx+x+1)$ and run the following command:
\begin{mma}\MLabel{MMA:11}
\In |OreReduce|[(n - 1) (x - 1) |S|[n] - x (x^2 + 4 t x + 2 x + 1) |Der|[x] +
 n (2 t x + x + 1), |ann|]\\

\Out 0\\
\end{mma}
This means that \eqref{rec:gfhx-nx'} is valid.

Finally, by setting $opoly=\widetilde{A}\, {\rm S}[n]-\widetilde{B}-C^2{\rm Der}[x]^2$ where $\widetilde{A}$, $\widetilde{B}$, and $C$ are given in Theorem \ref{rec:shiftn}, we have
\begin{mma}\MLabel{MMA:7}
\In |OreReduce|[(n - 1) ((n - 2) x^3 + (4 n t + n - 2 t + 2) x^2 - (4 n t + n - 6 t - 4) x \linebreak
      - n) |S|[n] +
       n ((n t - 5 t - 2) x^3 + (4 n t^2 + 2 n t - 8 t^2 + n - 6 t - 5) x^2 \linebreak
      + (5 n t + 2 n - 9 t - 4) x + n - 1) -  x^2 (x^2 + 4 t x + 2 x + 1)^2 |Der|[x]^2, |ann|]\\

\Out 0\\
\end{mma}
The \refOut{\ref{MMA:7}} means that the recurrence \eqref{rec:gfhx-nx''} is true.
This completes the proof.
\end{proof}

\section{Real-rootedness and interlacing property}\label{Sec:3}

The objective of this section is to show the real-rootedness of the generating function of Speyer's $g$-polynomial $h_n(x;t)$ for any $t>0$.

\begin{theorem}\label{thm:rz-mfgnx}
For any $t>0$, the polynomial $h_n(x;t)$ in $x$ has only real zeros for all $n\ge 2$.
\end{theorem}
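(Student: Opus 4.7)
The plan is to prove Theorem \ref{thm:rz-mfgnx} by induction on $n$, using the three-term recurrence
\[
h_{n+2}(x;t)=(1+x)\,h_{n+1}(x;t)+tx\,h_{n}(x;t)
\]
from \eqref{rec:gfhx-n2} together with the classical interlacing criterion of Liu and Wang \cite{LiuWang}. Since every monomial in \eqref{eq:ex-g-poly-defi2} carries a positive power of $x$, we have $h_n(0;t)=0$ for every $n\ge 2$, so it is natural to factor out this common root and work with $\tilde h_n(x;t):=h_n(x;t)/x$, which satisfies the \emph{same} recurrence and has degree $n-2$. The target then becomes the stronger statement that, for every $t>0$ and $n\ge 2$, the polynomial $\tilde h_n(x;t)$ has only real, strictly negative zeros and that these zeros strictly interlace those of $\tilde h_{n+1}(x;t)$; this immediately yields Theorem \ref{thm:rz-mfgnx} since $h_n=x\,\tilde h_n$.

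The base cases are handled by direct calculation from \eqref{g-uni-dn(t)} and the recurrence: $\tilde h_2=t$, $\tilde h_3=t(1+x)$, and $\tilde h_4=tx^2+(t^2+2t)x+t$ has discriminant $t^3(t+4)>0$ with two negative zeros by Vieta's formulas; moreover $\tilde h_4(-1;t)=-t^2<0$ places the unique zero $-1$ of $\tilde h_3$ strictly between the two zeros of $\tilde h_4$. For the inductive step, assume $\tilde h_n$ and $\tilde h_{n+1}$ have only negative real zeros that strictly interlace. Evaluating \eqref{rec:gfhx-n2} at any zero $x_0<0$ of $\tilde h_{n+1}$ gives $\tilde h_{n+2}(x_0;t)=tx_0\,\tilde h_n(x_0;t)$; since $tx_0<0$ and $\tilde h_n(x_0;t)$ alternates in sign across the zeros of $\tilde h_{n+1}$ by the interlacing hypothesis, so does $\tilde h_{n+2}$, which therefore has a root strictly between each consecutive pair of zeros of $\tilde h_{n+1}$. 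The remaining root is located using the easily verified invariants $\tilde h_n(0;t)=g_{n,1}(t)=t$ and leading coefficient $g_{n,n-1}(t)=t$ (both stable along the recurrence): $\tilde h_{n+2}(0;t)>0$ while the leading term ensures $\tilde h_{n+2}(x;t)\to+\infty$ as $x\to-\infty$, so the extra sign change forced by the alternation lies strictly to the left of the smallest zero of $\tilde h_{n+1}$, placing that remaining zero on the negative axis. This produces exactly $n$ simple negative real zeros of $\tilde h_{n+2}$ that strictly interlace those of $\tilde h_{n+1}$, which is precisely the output of the Liu--Wang criterion applied to our recurrence.

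The only substantive point, and essentially the main obstacle, is the sign hypothesis feeding into the Liu--Wang step: the coefficient $tx$ of $\tilde h_n$ in \eqref{rec:gfhx-n2} must have a fixed sign wherever the zeros of $\tilde h_{n+1}$ live. Maintaining the induction hypothesis that those zeros are \emph{strictly} negative, rather than merely real, is exactly what keeps $tx_0<0$ throughout and makes the recursion self-perpetuating; the rest of the argument is bookkeeping about interlacing and sign counts.
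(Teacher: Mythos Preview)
Your strategy coincides with the paper's: prove by induction that $h_n(x;t)\preceq h_{n+1}(x;t)$ using the recurrence \eqref{rec:gfhx-n2}, which is exactly how the paper establishes Theorem \ref{thm:interlac-hnxt} via Liu--Wang's Theorem \ref{thm:Liu-Wang}. Factoring out the common root at $x=0$ is a harmless cosmetic change, and the paper's invocation of Liu--Wang amounts to the same sign-alternation argument you carry out by hand.

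However, the inductive step as you wrote it contains two bookkeeping errors. First, the root count is off by one: $\tilde h_{n+1}$ has $n-1$ zeros, so the alternation argument produces only $n-2$ roots of $\tilde h_{n+2}$ (one in each of the $n-2$ gaps between consecutive zeros). Since $\deg\tilde h_{n+2}=n$, you must locate \emph{two} further roots, not ``the remaining root.'' You record the invariant $\tilde h_{n+2}(0;t)=t>0$ but never actually use it; combined with $\tilde h_{n+2}(u_1;t)=tu_1\,\tilde h_n(u_1;t)<0$ (where $u_1$ is the largest zero of $\tilde h_{n+1}$), it supplies the missing root in $(u_1,0)$. Second, your assertion that $\tilde h_{n+2}(x;t)\to+\infty$ as $x\to-\infty$ is false when $n$ is odd, since the leading term is $tx^n$. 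The correct observation is parity-independent: $\tilde h_{n+2}$ has sign $(-1)^{n-1}$ at the smallest zero $u_{n-1}$ of $\tilde h_{n+1}$ and sign $(-1)^n$ as $x\to-\infty$, and these are always opposite, forcing the leftmost root. With these two fixes your argument is complete and equivalent to the paper's.
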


Before proving Theorem \ref{thm:rz-mfgnx}, let us first recall some definitions and results on Sturm sequences. Let PF be the set of real-rooted polynomials with nonnegative coefficients, including any nonnegative constant for convenience.
Given two polynomials $f(x),\,g(x)\in \mathrm{PF}$, assume $f(u_i)=0$ and $g(v_j)=0$.
We say that $g(x)$ interlaces $f(x)$, denoted $g(x)\preceq f(x)$, if either
$\deg f(x)=\deg g(x)=n$ and
\begin{align}\label{defi:inter-z1}
{v_n\le u_n\le\cdots\le v_2\le u_2\le v_1\le u_1},
\end{align}
or $\deg f(x)=\deg g(x)+1=n$ and
\begin{align}\label{defi:inter-z2}
{u_{n}\le v_{n-1}\le u_{n-1}\le\cdots\le v_{2}\le u_{2}\le v_{1}\le u_{1}}.
\end{align}
We say that $g(x)$ interlaces $f(x)$ strictly, denoted $g(x)\prec f(x)$, if the inequalities in \eqref{defi:inter-z1} or \eqref{defi:inter-z2} hold strictly.
Following Liu and Wang \cite{LiuWang}, we also let $a\preceq bx+c$ for any nonnegative $a,b,c$, and let $0\preceq f$ and $f\preceq 0$ for any $f\in \mathrm{PF}$.
A polynomial sequence $\{f_n(x)\}_{n\geq 0}$ with each $f_n(x)\in \mathrm{PF}$
is said to be a \emph{generalized Sturm sequence} if
$${f_0(x)\preceq f_1(x)\preceq \cdots \preceq  f_{n-1}(x)\preceq f_n(x)\preceq \cdots.}$$
Liu and Wang \cite{LiuWang} established the following sufficient condition for determining whether two polynomials have interlacing zeros.

\begin{theorem}\cite[Theorem 2.3]{LiuWang}\label{thm:Liu-Wang}
Let $F(x),f(x),g_1(x),\ldots,g_k(x)$ be real polynomials satisfying the following conditions.
\begin{itemize}
\item[$(i)$]There exist some real polynomials $\phi(x),\psi_1(x),\ldots,\psi_k(x)$ such that
\begin{align}\label{eq-rec-LW}
F(x)=\phi(x)f(x)+\psi_1(x)g_1(x)+\cdots+\psi_k(x)g_k(x),
\end{align}
and $\deg F(x)= \deg f(x)$ or $\deg F(x)= \deg f(x)+1$.
\item[$(ii)$] $f(x),\,g_j(x)$ are polynomials with only real zeros, and $g_j(x)\preceq f(x)$ for all $j$.
\item[$(iii)$]The leading coefficients of $F(x), g_1(x),\ldots,g_k(x)$ have the same sign.
\end{itemize}
Suppose that $\psi_j(r)\le 0$ for each $j$ and each zero $r$ of $f(x)$. Then $F(x)$ has only real zeros and $f(x)\preceq F(x)$.
In particular, if for each zero $r$ of $f(x)$, there exists an index $j$ such that
$g_j(x)\prec f(x)$  and $\psi_j(r)<0$, then $f(x)\prec F(x)$.
\end{theorem}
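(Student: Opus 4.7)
The plan is to prove the theorem by sign-chasing at the zeros of $f$ and invoking the intermediate value theorem. By simultaneously negating $F$ and all $g_j$ if necessary, I may assume the common leading-coefficient sign in condition (iii) is positive; this preserves both the identity \eqref{eq-rec-LW} and the hypothesis $\psi_j(r)\le 0$. Set $n=\deg f$ and let $u_1\ge u_2\ge\cdots\ge u_n$ be the zeros of $f$.

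The first step is a sign lemma: for each $i$ and each $j$, I claim $(-1)^{i-1}g_j(u_i)\ge 0$. Indeed, the interlacing $g_j\preceq f$ forces $u_i$ to lie in the closed interval bounded by the two consecutive zeros $v_i^{(j)}\le u_i\le v_{i-1}^{(j)}$ of $g_j$ (using the conventions $v_0^{(j)}=+\infty$ and, when $\deg g_j<n$, $v_n^{(j)}=-\infty$), and on the interior of that interval $g_j$ has constant sign $(-1)^{i-1}$ since its leading coefficient is positive. Using $f(u_i)=0$, the decomposition \eqref{eq-rec-LW} gives $F(u_i)=\sum_j\psi_j(u_i)g_j(u_i)$, and combining the sign lemma with the hypothesis $\psi_j(u_i)\le 0$ yields $(-1)^{i-1}F(u_i)\le 0$.

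Thus the values $F(u_1),F(u_2),\ldots,F(u_n)$ alternate in (weak) sign, so the intermediate value theorem produces $n-1$ zeros of $F$, one in each closed interval $[u_{i+1},u_i]$. Since $F$ has positive leading coefficient and $F(u_1)\le 0$, a further zero lies in $[u_1,+\infty)$; if moreover $\deg F=n+1$, then comparing the sign of $F(u_n)$ (namely $(-1)^{n}$) with that of $F(-\infty)$ (namely $(-1)^{n+1}$) produces an additional zero in $(-\infty,u_n]$. This accounts for all $\deg F$ zeros, so $F$ is real-rooted, and their locations give $f\preceq F$. For the strict statement, if for each zero $r$ of $f$ there is an index $j$ with $g_j\prec f$ and $\psi_j(r)<0$, then $u_i$ lies strictly inside the open interval where $g_j(u_i)\ne 0$, so that sign inequality becomes strict, $F(u_i)\ne 0$, and the interior zeros of $F$ lie strictly inside the open intervals $(u_{i+1},u_i)$ and $(u_1,+\infty)$.

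The main obstacle is the bookkeeping in degenerate configurations: when several $u_i$ coincide, when an $u_i$ is itself a zero of some $g_j$, or when some $\psi_j(u_i)$ vanishes. In these situations the intermediate-value step can merge two simple zeros into one of higher multiplicity, and one must verify that the polynomial identity forces $F$ to have the correct local multiplicity at the repeated value so that the total zero count still reaches $\deg F$. A cleaner route is a perturbation/continuity argument: slightly perturb the coefficients of $f$ and the $g_j$ so that all zeros become simple and distinct, apply the nondegenerate case (where the sign alternation is strict), and pass to the limit using continuity of the zeros of a family of real-rooted polynomials. This reduction also absorbs automatically the Liu--Wang conventions $0\preceq f$, $f\preceq 0$, and $a\preceq bx+c$ for nonnegative $a,b,c$.
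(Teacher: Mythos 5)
First, a point of reference: the paper does not prove this statement at all --- it is quoted verbatim as Theorem 2.3 of Liu and Wang \cite{LiuWang} and used as a black box, so there is no internal proof to compare yours against. Judged on its own, your main line of argument is the standard one for results of this type and is correct in the generic case: the sign lemma $(-1)^{i-1}g_j(u_i)\ge 0$ does follow from $g_j\preceq f$ with positive leading coefficients, the identity then gives $(-1)^{i-1}F(u_i)\le 0$, and the intermediate value theorem together with the behaviour at $\pm\infty$ accounts for all $\deg F$ zeros and yields $f\preceq F$.

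The gap is exactly where you flag it, and your proposed repair does not close it. If you perturb $f$ so that its zeros become simple and distinct, the zeros move, and the hypothesis ``$\psi_j(r)\le 0$ at every zero $r$ of $f$'' is \emph{not} inherited by the perturbed configuration: when $\psi_j(r)=0$ at an original zero $r$ and $\psi_j$ changes sign there, the perturbed zero may land where $\psi_j>0$, so the nondegenerate case cannot be invoked. A second issue is condition $(i)$: if $\deg(\phi f)$ or $\deg(\psi_jg_j)$ exceeds $\deg F$ and the identity holds by cancellation of high-order terms, perturbing $f$ and the $g_j$ can destroy that cancellation and raise $\deg F_\epsilon$ above $\deg f_\epsilon+1$. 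The standard way to handle the degenerate configurations directly (and the one you should substitute for the perturbation step) is a multiplicity count: if $r$ is a zero of $f$ of multiplicity $m\ge 2$, then $g_j\preceq f$ forces $r$ to be a zero of each $g_j$ of multiplicity at least $m-1$, hence the identity \eqref{eq-rec-LW} makes $r$ a zero of $F$ of multiplicity at least $m-1$; one then runs your sign-alternation argument on the distinct zeros of $f$, with the deficit of $m-1$ zeros at each multiple zero supplied by this observation. With that replacement the proof is complete; as written, the final step is a sketch with a genuine hole rather than a proof.
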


We are now in a position to present the interlacing property of $h_n(x;t)$, which is stronger than Theorem \ref{thm:rz-mfgnx}.

\begin{theorem}\label{thm:interlac-hnxt}
For any $t>0$ the sequence $\{h_n(x;t)\}_{n\ge 2}$ is a generalized Sturm sequence.
\end{theorem}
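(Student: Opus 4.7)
The plan is to prove Theorem \ref{thm:interlac-hnxt} by induction on $n$, using the three-term recurrence \eqref{rec:gfhx-n2}
\[
 h_{n+1}(x;t) \;=\; (1+x)\,h_n(x;t) \;+\; tx\,h_{n-1}(x;t),
\]
together with the Liu--Wang interlacing criterion (Theorem \ref{thm:Liu-Wang}). Throughout, $t>0$ is fixed. I shall carry along the joint inductive hypothesis that $h_n(x;t)\in\mathrm{PF}$ and $h_{n-1}(x;t)\preceq h_n(x;t)$, so that each step simultaneously upgrades real-rootedness and interlacing to the next index.

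For the base case, a direct expansion of \eqref{g-uni-dn(t)} gives $h_2(x;t)=tx$ and $h_3(x;t)=tx(1+x)$, both of which lie in $\mathrm{PF}$. The zero set $\{0\}$ of $h_2$ interlaces $\{-1,0\}$ of $h_3$ in the sense of \eqref{defi:inter-z2}, so $h_2(x;t)\preceq h_3(x;t)$.

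For the inductive step, assume $h_{n-1}(x;t)\preceq h_n(x;t)$ and $h_{n-1}, h_n\in\mathrm{PF}$. I apply Theorem \ref{thm:Liu-Wang} with
\[
 F=h_{n+1}(x;t),\quad f=h_n(x;t),\quad g_1=h_{n-1}(x;t),\quad \phi(x)=1+x,\quad \psi_1(x)=tx.
\]
Condition (i) is exactly the recurrence \eqref{rec:gfhx-n2}, with $\deg F=n=\deg f+1$, so we are in the second admissible case. Condition (ii) is the inductive hypothesis. For (iii), the leading coefficient of $h_n(x;t)$ is $g_{U_{n,n-1}}(t)=t>0$ by \eqref{g-uni-dn(t)}, and similarly for $h_{n-1}$ and $h_{n+1}$, so all three leading coefficients coincide with $t>0$. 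The critical sign condition $\psi_1(r)\le 0$ at each zero $r$ of $f=h_n(x;t)$ is immediate, because $h_n\in\mathrm{PF}$ has nonnegative coefficients, forcing every real zero to satisfy $r\le 0$, so $\psi_1(r)=tr\le 0$. Theorem \ref{thm:Liu-Wang} then yields $h_n(x;t)\preceq h_{n+1}(x;t)$ with $h_{n+1}(x;t)$ real-rooted (and with manifestly nonnegative coefficients), which closes the induction.

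I do not expect any serious obstacle: the cleanness of the recurrence \eqref{rec:gfhx-n2} makes the Liu--Wang hypotheses essentially automatic, since the multiplier $\psi_1(x)=tx$ has its unique zero on the boundary $x=0$ of the interval $(-\infty,0]$ containing the zeros of $h_n$, and the polynomial $\phi(x)=1+x$ has positive leading coefficient. The only mild check is the bookkeeping of degrees ($\deg h_n=n-1$) to place ourselves in the correct case of Liu--Wang's two interlacing conventions. Once established, the generalized Sturm property $h_2\preceq h_3\preceq h_4\preceq\cdots$ trivially implies Theorem \ref{thm:rz-mfgnx}, and thus feeds into the applications of Br\"and\'en's Theorem \ref{thm:Branden} that are needed for Theorem \ref{thm:g-poly-inflog}.
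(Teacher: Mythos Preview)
Your proof is correct and follows essentially the same approach as the paper: induction on $n$ with base case $h_2\preceq h_3$, and the inductive step established via the Liu--Wang criterion (Theorem~\ref{thm:Liu-Wang}) applied to the recurrence \eqref{rec:gfhx-n2} with $\phi(x)=1+x$ and $\psi_1(x)=tx$. The only difference is a harmless index shift ($n\mapsto n+1$) relative to the paper's presentation.
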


\begin{proof}
Fix $t>0$ and $n\ge 2$ throughout this proof. Denote $h_n(x):=h_n(x;t)$ for brief. We shall prove the desired result by using induction on $n$. First, by \eqref{eq:ex-g-poly-defi2} we have $h_2(x)=tx$ and $h_3(x)=tx^2+tx$. Clearly, both $h_2(x)$ and $h_3(x)$ are real-rooted polynomials in $x$, and
$$
 h_2(x)\preceq h_3(x).
$$
Assume that $h_n(x)$ and $h_{n+1}(x)$ are real-rooted polynomials and $h_n(x)\preceq h_{n+1}(x)$. It is sufficient to show
\begin{align}\label{intlc-n+12}
h_{n+1}(x)\preceq h_{n+2}(x).
\end{align}
To this end, we shall employ Theorem \ref{thm:Liu-Wang}. First, by recursion \eqref{rec:gfhx-n2}, we have
\begin{align*}
  h_{n+2}(x)&\,
=(1+x)h_{n+1}(x)+txh_n(x)
\end{align*}
which is of the form of \eqref{eq-rec-LW} with $k=1$, where
$$F(x)=h_{n+2}(x),\quad f(x)=h_{n+1}(x),\quad g_1(x)=h_{n}(x),$$ and
\begin{align*}
\phi(x)=1+x,\quad
\psi_1(x)=tx.
\end{align*}
By \eqref{eq:ex-g-poly-defi2}, we have that $\deg F(x)= \deg f(x)+1=n+1$. Thus, condition (i) of Theorem \ref{thm:Liu-Wang} is satisfied.
By inductive hypothesis that $h_n(x)\preceq h_{n+1}(x)$, we have condition (ii) of Theorem \ref{thm:Liu-Wang} holds, too.
Moreover, the leading coefficients of $F(x)$ and $g_1(x)$ are all positive by \eqref{eq:ex-g-poly-defi2}. Therefore, condition (iii) of Theorem \ref{thm:Liu-Wang} is also satisfied.
It remains to show that $\psi_1(r)\le 0$ for each zero $r$ of $f(x)$. Note that $f(x)$ has only nonpositive zeros since its coefficients are all nonnegative. Clearly, for $x\le 0$, we have $\psi_1(x)\le 0$.

It follows from Theorem \ref{thm:Liu-Wang} that \eqref{intlc-n+12} holds and $h_{n+2}(x)$ has only real zeros.
This completes the proof.
\end{proof}

\section{Proofs of main results}\label{sec:proof}

In this section, we complete the proofs of Theorems \ref{thm:g-poly-inflog} and \ref{thm:g-poly-hoTi}.

\begin{proof}[Proof of Theorem \ref{thm:g-poly-inflog}]
It is well-known that a real-rooted polynomial with nonnegative coefficients is log-concave. By Theorems \ref{thm:rz-mfgnx}, the polynomial $h_n(x;t)$ in $x$ has only real zeros for any $t>0$ and $n\ge 2$. It follows from Theorem \ref{thm:Branden} that the sequence $\{g_{U_{n,d}}(t)\}_{d=1}^{n-1}$ is infinitely log-concave for $n\ge 4$ and $t>0$.
\end{proof}

Ma\v{r}\'{i}k \cite{Marik1964} showed that if $f(x)=\sum_{k=0}^n \frac{a_k}{k!(n-k)!} x^k$ has only real zeros for $n\ge 3$, then the sequence $\{a_k\}_{k=0}^n$ satisfies the higher order Tur\'{a}n inequality \eqref{eq:ho-Turan}. Recently, Wang and Zhang \cite{Wang-Zhang2024} extended Ma\v{r}\'{i}k's result and established the following sufficient conditions for proving \eqref{eq:ho-Turan} for nonnegative sequences.

\begin{theorem}\cite[Theorem 1.2]{Wang-Zhang2024}\label{thm:Wang-Zhang}
Let $n\ge 3$ and $\{a_k\}_{k=0}^n$ be a sequence of positive integers. Let $f_1(x)=\sum_{k=0}^n a_k x^k$, $f_2(x)=\sum_{k=0}^n \frac{a_k}{k!} x^k$, and $f_3(x)=\sum_{k=0}^n \frac{a_k}{(n-k)!} x^k$. If one of $f_i(x)$, $1\le i \le 3$ has only real zeros, then the sequence $\{a_k\}_{k=0}^n$ satisfies the higher order Tur\'{a}n inequality \eqref{eq:ho-Turan}.
\end{theorem}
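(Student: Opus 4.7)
\textbf{Proof proposal for Theorem \ref{thm:Wang-Zhang}.}

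The plan is to reduce each of the three hypotheses to Mařík's criterion just recalled, i.e., to show in each case that the polynomial $F(x):=\sum_{k=0}^n \frac{a_k}{k!(n-k)!}\,x^k$ has only real zeros, after which Mařík's theorem delivers the higher order Tur\'an inequality for $\{a_k\}_{k=0}^n$. Two classical operations will serve as the only tools. First, the reversal $p(x)\mapsto x^{\deg p}p(1/x)$ sends every real zero $r$ to $1/r$ and hence preserves real-rootedness, provided the constant term of $p$ is nonzero; this is granted here because each $a_k>0$. Second, by the classical Pólya–Schur theorem, $\{1/k!\}_{k\ge 0}$ is a multiplier sequence of the first kind: for every real-rooted polynomial $\sum_{k=0}^n b_k x^k$, the transformed polynomial $\sum_{k=0}^n \frac{b_k}{k!}\,x^k$ is also real-rooted. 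This follows from $e^x=\lim_{n\to\infty}(1+x/n)^n\in\mathcal{L}$-$\mathcal{P}$, or equivalently from the real-rootedness of the Laguerre polynomials $L_n(-x)=\sum_{k=0}^n \binom{n}{k}\frac{x^k}{k!}$ for every $n$. I will also use that inequality \eqref{eq:ho-Turan} is invariant under the reindexing $a_k\mapsto a_{n-k}$, which is a direct substitution $j=n-k-1$ in \eqref{eq:ho-Turan}.

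With these tools, the three cases are handled in a short chain. For case $f_1$, applying the multiplier $\{1/k!\}$ to $f_1$ yields $f_2$ real-rooted, so this case reduces to case $f_2$. For case $f_3$, reversing $f_3$ yields $\sum_k \frac{a_{n-k}}{k!}\,x^k$ real-rooted, which is exactly $f_2$ associated with the reversed sequence $\{a_{n-k}\}_{k=0}^n$; case $f_2$ then gives \eqref{eq:ho-Turan} for $\{a_{n-k}\}$, equivalent by reversal-invariance to \eqref{eq:ho-Turan} for $\{a_k\}$. Case $f_2$ itself is handled in three steps: first reverse $f_2$ to get $\sum_k \frac{a_{n-k}}{(n-k)!}\,x^k$ real-rooted; then apply the multiplier $\{1/k!\}$ to obtain $\sum_k \frac{a_{n-k}}{k!(n-k)!}\,x^k$ real-rooted; finally reverse once more to obtain $F(x)=\sum_k \frac{a_k}{k!(n-k)!}\,x^k$ real-rooted. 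Mařík's theorem then yields the desired inequality.

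The main obstacle is essentially bookkeeping: ensuring that each reversal returns a polynomial of exactly the same degree (guaranteed by $a_k>0$, so leading and constant terms of $f_1,f_2,f_3$ are nonzero) and that the compositions of reversal and multiplier land on the precise form required by Mařík. The only substantive external input is the Pólya–Schur classification that $\{1/k!\}$ is a multiplier sequence, which is classical and can be invoked by citation rather than reproved.
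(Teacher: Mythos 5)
The paper does not actually prove this statement---it is quoted verbatim from Wang and Zhang \cite[Theorem 1.2]{Wang-Zhang2024}, so there is no internal proof to compare against. Your argument is correct: reducing all three cases to Ma\v{r}\'{i}k's criterion via the reversal map and the P\'{o}lya--Schur multiplier sequence $\{1/k!\}$ is sound (positivity of the $a_k$ rules out any degree drop under reversal, and the reindexing $j=n-k-1$ carries the full range $1\le m\le n-2$ of instances of \eqref{eq:ho-Turan} onto itself), and this is essentially the standard proof one expects in the cited source, whose very title announces criteria ``in the spirit of Ma\v{r}\'{i}k's theorem.''
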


We are now ready to show a proof of Theorem \ref{thm:g-poly-hoTi}.

\begin{proof}[Proof of Theorem \ref{thm:g-poly-hoTi}]
Note that Theorem \ref{thm:Wang-Zhang} still holds for $a_k\in \mathbb{R}_{\ge 0}$, which can be verified by the argument in the proof of \cite[Theorem 1.2]{Wang-Zhang2024}. By Theorems \ref{thm:rz-mfgnx}, we have the real-rootedness of the polynomial $h_n(x;t)$ for any $t>0$ and $n\ge 2$. Set $a_k:=g_{n,k}(t)$ in Theorem \ref{thm:Wang-Zhang} with the convention that $a_0=a_n=0$. Then for any given $t>0$, we get the desired result that the sequence $\{g_{n,k}(t)\}_{k=1}^{n-1}$ satisfies the higher order Tur\'{a}n inequality \eqref{eq:ho-Turan} for $n\ge 5$.
This completes the proof.
\end{proof}

\section{Inequalities for $g_{n,d}(1)$}\label{sec:5}
As noted by Speyer \cite{Speyer2008, Speyer2009}, the number $S_i(n,d)=\binom{n-i-1}{d-i}\binom{n-d-1}{i-1}$ has interesting geometric interpretation. Therefore, $g_{n,d}(1)=\sum_{i=1}^d S_i(n,d)$ deserves more attention.

Let us recall some definitions and results showed by Speyer \cite{Speyer2008, Speyer2009}. Let $G(d,n)$ denote the Grassmannian of $d$-subspaces in $\mathbb{C}^n$.
Let $K=\bigcup_{n=1}^{\infty}\mathbb{C}((t^{1/n}))$ be the filed of Puiseux series, and let $v:K^* \rightarrow \mathbb{Q}$ be the map such that if $x=\sum_{i\ge M} a_i t^{i/N}$ with $a_M\neq 0$ then $v(x)=M/N$. Assume that there exists a $K$-valued point of $G(d,n)$ none of whose Pl\"{u}cker coordinate $p_I(x)$ are zero.
To helps to understand the combinatorial meaning of being a tropical Pl\"{u}cker vector, Speyer provided a simple polyhedral construction. Let $\Delta(d,n)$ denote the $(d,n)$-hypersimplex, the convex hull of the $\binom{n}{d}$ points $e_{i_1}+\cdots+e_{i_d}$ where $(i_1,\ldots,i_d)$ runs over $\binom{[n]}{d}$, with the convention that $[n]=\{1,2,\ldots,n\}$ and for any set $S$, $\binom{S}{d}$ is the set of $d$ element subsets of $S$. Let $I\mapsto P_I$ be a function $\binom{[n]}{d}\rightarrow \mathbb{Q}$.
Speyer \cite{Speyer2009} defined a polyhedral subdivision of $\Delta(d,n)$ as follows: Let $Q$ be the convex hull in $\Delta(d,n)\times \mathbb{R}$ of the points $(e_{i_1}+\cdots+e_{i_d},P_{i_1 \ldots i_d})$ and let $Q+\mathbb{R}_{\ge 0}$ denote the Minkowski sum of $Q$ with $\{0\}\times \mathbb{R}_{\ge 0}\subset \mathbb{R}^n \times \mathbb{R}$. Take the facets of $Q+\mathbb{R}_{\ge 0}$ whose outward pointing normal vectors have negative components in the last coordinate and project them down to $\Delta(d,n)$. This gives a polyhedral subdivision $\mathcal{D}_P$ of $\Delta(d,n)$ which is known as the regular subdivision associated to $P$. Then $P$ is a tropical Pl\"{u}cker vector if and only if, for each face $F$ of $\Delta$, the vertices of $F$ form the bases of a matroid. When this condition holds, the number of $i$-dimensional bounded faces of the corresponding tropical linear space is the number of $(n-i)$-dimensional interior faces in $\mathcal{D}$. Speyer obtained the following result and hence confirmed his $f$-vector conjecture \cite{Speyer2008} in the case of a tropical linear space realizable in characteristic zero.

\begin{theorem}\cite{Speyer2009}\label{thm:Speyer}
Suppose that $P$ is a tropical Pl\"{u}cker vector arising as $v(p_I(x))$ for some $x\in G(d,n)(K)$. Then $\mathcal{D}_P$ has at most $\binom{n-i-1}{d-i}\binom{n-d-1}{i-1}$ interior faces of dimension $n-i$, with equality if and only if all of the facets of $\mathcal{D}_P$ correspond to series-parallel matroids.
\end{theorem}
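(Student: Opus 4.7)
The plan is to build up an invariant $g_M(t)$ for matroids whose coefficients control the interior face counts of regular matroidal subdivisions, and then to pin it down for the uniform matroid and characterize the equality case.

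First, I would set up a local-to-global framework. For a regular matroid subdivision $\mathcal{D}_P$ of $\Delta(d,n)$, I would aim to express the count of interior $(n-i)$-dimensional faces as $\sum_F g_{M_F,i}$, summed over the facets $F$ with corresponding matroids $M_F$. Since every face of a matroid polytope is itself a matroid polytope, the contribution of each facet is controlled by local matroid invariants, which I would package into a polynomial $g_M(t)=\sum_i g_{M,i}\,t^i$. Expected multiplicativity $g_{M_1\oplus M_2}(t)=g_{M_1}(t)\,g_{M_2}(t)$ under direct sum, together with a valuative identity under matroid subdivisions, should make $g_M(t)$ well defined and recursively computable.

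Second, I would compute $g_{U_{n,d}}(t)$ by exhibiting an explicit subdivision of $\Delta(d,n)$ into series-parallel matroid polytopes whose interior face count attains the claimed bound. Iteratively performing parallel and series extensions from a base matroid, combined with an inductive count of the newly produced interior faces, should give $[t^i]g_{U_{n,d}}(t) = \binom{n-i-1}{d-i}\binom{n-d-1}{i-1}$. The closed form then follows from the multiplicative and valuative rules of the previous step applied to these series-parallel building blocks.

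Third, I would prove the upper bound $\sum_F g_{M_F,i}\leq [t^i]g_{U_{n,d}}(t)$ together with its equality characterization. This is the main obstacle: one needs to show that the contribution of any non-series-parallel facet matroid is strictly smaller than what a series-parallel refinement would yield. The $\mathbb{C}$-realizability hypothesis inherited from $x\in G(d,n)(K)$ is essential here, since the analogous bound for arbitrary tropical Pl\"{u}cker vectors is the open part of Speyer's conjecture. A natural route is to interpret $g_M(t)$ Chow-theoretically --- via intersection numbers on the permutohedral variety or on a compactification of the realization space of $M$ --- and then exploit Hodge-index or positivity inequalities available in characteristic zero to force the coefficient bounds and to identify the equality case with the series-parallel locus.
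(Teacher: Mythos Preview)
This theorem is not proved in the paper at all: it is quoted verbatim from Speyer's 2009 article \cite{Speyer2009} as background, and the paper neither reproves it nor sketches an argument. The only use the paper makes of it is to give a geometric meaning to $g_{n,d}(1)$, after which the paper's own contribution in this section (Theorem~\ref{thm:inflg&hTq-gnd1}) is simply the specialization $t=1$ of the infinite log-concavity and higher order Tur\'{a}n results already established in Section~\ref{sec:proof}. So there is no ``paper's own proof'' to compare your proposal against.

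As a side remark on your sketch itself: your outline is in the right spirit for Speyer's original argument, but the mechanism in step~3 is off. Speyer does not use Hodge-index or Chow-theoretic positivity; the Chow formula you allude to was a later conjecture of L\'opez de Medrano--Rinc\'on--Shaw, proved by Berget--Eur--Spink--Tseng well after \cite{Speyer2009}. Speyer's route is $K$-theoretic: he defines $g_M(t)$ via the $K$-class of the structure sheaf of the torus-orbit closure in $G(d,n)$, proves nonnegativity of its coefficients for $\mathbb{C}$-realizable matroids using a Kawamata--Viehweg-type vanishing theorem, and then deduces the face-count bound from the exact complex of structure sheaves indexed by interior faces of $\mathcal{D}_P$ (the one displayed in Section~\ref{sec:5}). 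The equality characterization comes from the fact that $g_M(t)=t$ exactly when $M$ is series-parallel. If you want to reconstruct the proof, that is the machinery to invoke, not intersection theory on the permutohedral variety.
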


Theorem \ref{thm:Speyer} implies that $g_{n,d}(1)$ enumerates all the interior faces of $\mathcal{D}_P$ with dimension runs over $\{n-d,n-d+1,\ldots, n-1\}$ in the case that all of the facets of $\mathcal{D}_P$ correspond to series-parallel matroids. As a consequence of Theorems \ref{thm:g-poly-inflog} and \ref{thm:g-poly-hoTi}, we obtain the main result of this section.

\begin{theorem}\label{thm:inflg&hTq-gnd1}
For $n\ge 4$, the sequence $\{g_{n,d}(1)\}_{d=1}^{n-1}$ is infinitely log-concave. And for $n\ge 5$, the sequence $\{g_{n,d}(1)\}_{d=1}^{n-1}$ satisfies the higher order Tur\'{a}n inequality \eqref{eq:ho-Turan}.
\end{theorem}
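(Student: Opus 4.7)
The statement is a direct specialization of the two main theorems already proved, so my plan is simply to invoke them at the value $t=1$. The only thing to verify is that $t=1$ falls within the hypothesis $t>0$ required in both results, which is of course trivially the case.

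More concretely, the plan has two parallel steps. First, to obtain the infinite log-concavity claim, I would apply Theorem \ref{thm:g-poly-inflog} with $t=1$: since that theorem asserts that $\{g_{U_{n,d}}(t)\}_{d=1}^{n-1}$ is infinitely log-concave for \emph{every} $t>0$ whenever $n\ge 4$, specializing gives that $\{g_{n,d}(1)\}_{d=1}^{n-1}$ is infinitely log-concave for $n\ge 4$. Second, to obtain the higher order Tur\'{a}n inequality, I would apply Theorem \ref{thm:g-poly-hoTi} at $t=1$: since that theorem guarantees the inequality \eqref{ieq:h-T-gud} for all $t>0$ and all $n\ge 5$, substitution yields the conclusion for $\{g_{n,d}(1)\}_{d=1}^{n-1}$ when $n\ge 5$.

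There is no real obstacle to overcome here, since all the analytic work—namely the real-rootedness of $h_n(x;t)$ (Theorem \ref{thm:rz-mfgnx}) and the subsequent applications of Br\"{a}nd\'{e}n's Theorem \ref{thm:Branden} and the Wang--Zhang criterion (Theorem \ref{thm:Wang-Zhang})—has been carried out uniformly in the parameter $t>0$. The point worth emphasizing in the write-up is the geometric significance that motivates singling out $t=1$: by Theorem \ref{thm:Speyer}, $g_{n,d}(1)=\sum_{i=1}^{d} S_i(n,d)$ counts interior faces of $\mathcal{D}_P$ of dimensions $n-d, n-d+1, \ldots, n-1$ when all facets of $\mathcal{D}_P$ correspond to series-parallel matroids, so the inequalities obtained here translate into nontrivial inequalities on these geometric counts.

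Thus the proof reduces to a two-line deduction: take $t=1$ in Theorem \ref{thm:g-poly-inflog} to obtain infinite log-concavity for $n\ge 4$, and take $t=1$ in Theorem \ref{thm:g-poly-hoTi} to obtain the higher order Tur\'{a}n inequality for $n\ge 5$.
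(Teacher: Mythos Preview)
Your proposal is correct and matches the paper's approach exactly: the paper also presents this theorem as an immediate consequence of Theorems~\ref{thm:g-poly-inflog} and~\ref{thm:g-poly-hoTi} specialized at $t=1$, without providing a separate proof.
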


Speyer \cite{Speyer2009} also gave an alternative description of Theorem \ref{thm:Speyer}. Let $T$ be the torus $(\mathbb{C}^*)^n/\textrm{diag} (\mathbb{C}^*)$ which acts on $G(d,n)$. Denote by $\overline{Tx}$ the closure of the torus orbit through $x$. Speyer constructed a certain subscheme $Y$ of $G(d,n)$, and mentioned that $Y$ is a union of torus orbit closures $Y=\bigcup_j \overline{Tx_j^1}$, one for each facet of $\mathcal{D}$. The $\overline{Tx_j^1}$ are glued along smaller torus orbit closures indexed by the smaller interior faces of $\mathcal{D}$. That is, the structure sheaf $\mathcal{O}_Y$ of $Y$ fits into an exact complex of sheaves
$$
0\rightarrow \mathcal{O}_Y \rightarrow \bigoplus_{j=1}^{f_1}\mathcal{O}_{\overline{Tx_j^1}}\rightarrow\cdots
\rightarrow \bigoplus_{j=1}^{f_i}\mathcal{O}_{\overline{Tx_j^i}}\rightarrow \cdots
\rightarrow \bigoplus_{j=1}^{f_n}\mathcal{O}_{\overline{Tx_j^n}}\rightarrow 0
$$
where each $x_j^i$ is a point of $G(d,n)$ such that the torus orbit $Tx_j^i$ is $(n-i)$-dimensional. Denote by $f_i$ the number of strata of dimension $n-i$. Then Theorem \ref{thm:Speyer} reads
$$
f_i\le \binom{n-i-1}{d-i}\binom{n-d-1}{i-1}
$$
with equality if and only if each $\overline{Tx_k^1}$ corresponds to a series-parallel matroid.

Clearly, one can see that if each $\overline{Tx_k^1}$ corresponds to a series-parallel matroid, then $g_{n,d}(1)=\sum_{i} f_i$ and the number $\sum_{i} f_i$ satisfies Theorem \ref{thm:inflg&hTq-gnd1}, too.

\section{The gamma-positivity}\label{sec:g-p}

In this section, we show the $\gamma$-positive of the polynomial $h_n(x;t)$.
Given a polynomial $f(x)=a_{r_1} x^{r_1}+\cdots + a_{r_s} x^{r_s} \in \mathbb{R}[x]$ with $r_1\le \cdots \le r_s$ and $a_{r_1}, a_{r_s}\neq 0$, the {\it darga} of $f(x)$ is defined to be ${\rm dar}(f):=r_1+r_s$, see Zeilberger \cite{Zeilberger1989}. The polynomial $f(x)$ is called {\it palindromic of darga} $n$ if $r_1+r_s=n$ and $a_{r_1+k}=a_{r_s-k}$ for all $k$. Let $\mathcal{P}_n$ denote the palindromic polynomials of darga $n$ with the convention that ${\rm dar}(0)$ can be any nonnegative integer. Sun, Wang and Zhang \cite[Theorem 2.8]{SWZ-2015} showed that $\mathcal{P}_n$ forms a linear space of dimension $\lfloor n/2 \rfloor +1$ and $\{x^i(1+x)^{n-2i}|i=0,1,\ldots,\lfloor n/2 \rfloor\}$ is a basis of $\mathcal{P}_n$.
Let $f(x)\in \mathcal{P}_n$ and
$$
f(x)=\sum_{i=0}^{\lfloor n/2 \rfloor} \gamma_i x^i (1+x)^{n-2i}.
$$
We say that $f(x)$ is {\it $\gamma$-positive of darga $n$} if all the $\gamma_i$ are nonnegative.

The notion of $\gamma$-positivity was first introduced by Fota and Sch\"{u}tzenberger \cite{Fota-Schutzenberger1970} in studying the classical Eulerian polynomials. The works of Stembridge \cite{Stembridge1997}, Br\"{a}nd\'{e}n \cite{Branden2004, Branden2008} and Gal \cite{Gal2005} made it clear that the concept of $\gamma$-positivity is of independently interest which provides a useful approach to the problem of unimodality for symmetric polynomials. As mentioned by Athanasiadis \cite[P. 2]{Athanasiadis}, symmetry and real-rootedness of a polynomial $f(x)\in\mathbb{R}_{\ge 0}[x]$ implies its $\gamma$-positivity, see also \cite[Lemma 4.1]{Branden2004} and \cite[Remark 3.1.1]{Gal2005}.

Clearly, the polynomial $h_n(x;t)=\sum_{d=1}^{n-1} g_{n,d}(t) x^d\in \mathcal{P}_n$ for any $t\in \mathbb{R}$, that is, $g_{n,1+k}(t)=g_{n,n-1-k}(t)$ for $0\le k\le n-1$ and $t\in \mathbb{R}$. It follows from Theorem \ref{thm:rz-mfgnx} and the above statement that $h_n(x;t)$ is $\gamma$-positive for $t>0$. More precisely, we have the following result.

\begin{theorem}\label{thm:gamma-p-gefgp}
Let $t>0$. For $n\ge 2$, we have
\begin{align}\label{eq:gm-p-hnx}
 h_n(x;t)=\sum_{i=0}^{\lfloor n/2 \rfloor} \gamma_{n,i} x^i (1+x)^{n-2i},
\end{align}
where $\gamma_{n,i}$ satisfies the following recurrence relation for $n\ge 4$ and $0\le i \le \lfloor n/2 \rfloor$,
\begin{align}\label{eq:rec-gamma-ni}
 \gamma_{n,i}=\gamma_{n-1,i}+t \,\gamma_{n-2,i-1}
\end{align}
with initial conditions $\gamma_{2,0}=\gamma_{3,0}=0$, $\gamma_{2,1}=\gamma_{3,1}=t$ and the conventions that $\gamma_{n,-1}=\gamma_{n,\lfloor n/2 \rfloor +1}=0$ for $n\ge 2$.
\end{theorem}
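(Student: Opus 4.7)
The plan is to combine the real-rootedness already established in Theorem \ref{thm:rz-mfgnx} with the three-term recurrence \eqref{rec:gfhx-n2} from Theorem \ref{rec:shiftn}. Throughout, fix $t>0$. Since $g_{n,d}(t)=g_{n,n-d}(t)$, the polynomial $h_n(x;t)=\sum_{d=1}^{n-1} g_{n,d}(t)x^d$ lies in $\mathcal{P}_n$. Because $h_n(x;t)$ has only real zeros and nonnegative coefficients, the palindromic real-rooted polynomial criterion (Brändén, Gal, Athanasiadis) applies and immediately yields a nonnegative expansion of the form \eqref{eq:gm-p-hnx}. In particular the existence of the $\gamma_{n,i}\ge 0$ with $0\le i\le \lfloor n/2\rfloor$ is settled before any recursion is invoked. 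The uniqueness of the expansion follows from the fact, cited in the preamble from Sun--Wang--Zhang, that $\{x^i(1+x)^{n-2i}:0\le i\le \lfloor n/2\rfloor\}$ is a basis of $\mathcal{P}_n$.

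To derive the recurrence \eqref{eq:rec-gamma-ni}, I would substitute the expansion \eqref{eq:gm-p-hnx} into \eqref{rec:gfhx-n2}. Since multiplication by $(1+x)$ sends $\mathcal{P}_{n+1}$ into $\mathcal{P}_{n+2}$ and multiplication by $x$ sends $\mathcal{P}_n$ into $\mathcal{P}_{n+2}$, all three sides live in $\mathcal{P}_{n+2}$, and one obtains
\begin{align*}
\sum_{i} \gamma_{n+2,i}\, x^i(1+x)^{n+2-2i}
&= \sum_{i} \gamma_{n+1,i}\, x^i(1+x)^{n+2-2i} \\
&\quad + t\sum_{j}\gamma_{n,j-1}\, x^{j}(1+x)^{n+2-2j},
\end{align*}
after re-indexing $j=i+1$ in the last sum. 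Because the family $\{x^i(1+x)^{n+2-2i}\}$ is a basis of $\mathcal{P}_{n+2}$, coefficient comparison gives $\gamma_{n+2,i}=\gamma_{n+1,i}+t\gamma_{n,i-1}$, and a shift $n\mapsto n-2$ produces the recurrence \eqref{eq:rec-gamma-ni} for $n\ge 4$.

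The initial conditions follow by direct inspection: from \eqref{eq:ex-g-poly-defi2} we have $h_2(x;t)=tx$, and matching $\gamma_{2,0}(1+x)^2+\gamma_{2,1}x=tx$ forces $\gamma_{2,0}=0$ and $\gamma_{2,1}=t$; similarly $h_3(x;t)=tx(1+x)$ yields $\gamma_{3,0}=0$ and $\gamma_{3,1}=t$. The boundary conventions $\gamma_{n,-1}=\gamma_{n,\lfloor n/2\rfloor+1}=0$ are vacuous slots that make the recurrence well-posed at the two endpoints. I do not foresee a genuine obstacle in this argument: once real-rootedness is in hand, the passage to $\gamma$-positivity is immediate from the palindromic structure, and the recurrence is a straightforward bookkeeping consequence of \eqref{rec:gfhx-n2} together with the basis property of $\{x^i(1+x)^{n-2i}\}$ in $\mathcal{P}_n$. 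The only mildly delicate point is keeping track of the shifts in the index $i$ when $n$ has different parities, but this is absorbed by the convention $\gamma_{n,\lfloor n/2\rfloor+1}=0$.
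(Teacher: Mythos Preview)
Your argument is correct and follows essentially the same route as the paper: both proofs rely on the three-term recurrence \eqref{rec:gfhx-n2}, expand each $h_m(x;t)$ in the basis $\{x^i(1+x)^{m-2i}\}$ of $\mathcal{P}_m$, and compare coefficients; the initial conditions are checked in the same way from $h_2$ and $h_3$. The only organisational difference is that the paper runs a straight induction on $n$ (never invoking real-rootedness), whereas you front-load the existence and nonnegativity of the $\gamma_{n,i}$ via Theorem \ref{thm:rz-mfgnx} and the palindromic real-root criterion. For the theorem as stated, that appeal is unnecessary: existence of the expansion already follows from $h_n\in\mathcal{P}_n$ and the basis property, and nonnegativity is not part of the statement (the paper deduces it afterwards from the recurrence itself, since $t>0$ and the initial $\gamma$'s are nonnegative). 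Either packaging is fine.
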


\begin{proof}
We shall prove Theorem \ref{thm:gamma-p-gefgp} by using induction on $n$. First by \eqref{eq:ex-g-poly-defi}, we have $h_2(x;t)=tx$, $h_3(x;t)=tx(1+x)$ and $h_4(x;t)=tx(1+x)^2+t^2x^2$. Clearly, \eqref{eq:gm-p-hnx} holds for $n=2, 3, 4$, and $\gamma_{2,0}=\gamma_{3,0}=0$, $\gamma_{2,1}=\gamma_{3,1}=t$. Moreover, it is easy to verify that the recurrence \eqref{eq:rec-gamma-ni} holds for $n=4$ and $0\le i\le 2$.
Assume \eqref{eq:gm-p-hnx} and \eqref{eq:rec-gamma-ni} hold true for some $n-1$ and $n$ where $n\ge 4$, we proceed to prove that
\begin{align}\label{eq:hn+1xt}
 h_{n+1}(x;t)=\sum_{i=0}^{\lfloor (n+1)/2 \rfloor} \big(\gamma_{n,i}+t \,\gamma_{n-1,i-1}\big) x^i (1+x)^{n+1-2i}.
\end{align}

By the recurrence \eqref{rec:gfhx-n2} given in Theorem \ref{rec:shiftn}, we have for some $n\ge 4$,
\begin{align*}
 h_{n+1}(x;t)&\,
=(1+x)h_{n}(x;t)+t x h_{n-1}(x;t).
\end{align*}
It follows from the inductive hypothesis that
\begin{align}
 h_{n+1}(x;t)
=&\, (1+x)\sum_{i=0}^{\lfloor n/2 \rfloor} \gamma_{n,i} x^i (1+x)^{n-2i}
  +t x \sum_{i=0}^{\lfloor (n-1)/2 \rfloor} \gamma_{n-1,i} x^i (1+x)^{n-1-2i}\nonumber\\
=&\, \sum_{i=0}^{\lfloor n/2 \rfloor} \gamma_{n,i} x^i (1+x)^{n+1-2i}
  +t \sum_{i=1}^{\lfloor (n+1)/2 \rfloor} \gamma_{n-1,i-1} x^{i} (1+x)^{n+1-2i}.\label{eq:hn+1xtrec}
\end{align}
Note that $\lfloor n/2 \rfloor=\lfloor (n+1)/2 \rfloor$ if $n$ is even and $\lfloor n/2 \rfloor +1=\lfloor (n+1)/2 \rfloor$ if $n$ is odd. In view of the convention that $\gamma_{n,-1}=\gamma_{n,\lfloor n/2 \rfloor +1}=0$ for $n\ge 2$, we get \eqref{eq:hn+1xt} from \eqref{eq:hn+1xtrec}.
This completes the proof.
\end{proof}

Clearly, Theorem \ref{thm:gamma-p-gefgp} implies that the polynomial $h_n(x;t)$ is $\gamma$-positivity of darga $n$ for any $t>0$ and $n\ge 2$. Besides, for $n\ge 2$,
\begin{equation*}
h_n(-1;t)=
\left\{
\begin{array}{ll}
0,\quad  & \textrm{ if } n \textrm{ is odd},\\
(-1)^{n/2} \gamma_{n,n/2}, \quad  & \textrm{ if } n \textrm{ is even}.
\end{array}
\right.
\end{equation*}
This means that $(-1)^{n/2} h_n(-1;t)\ge 0$ for $t>0$ and $n\ge 2$.

Notice that $\gamma_{n,i}$ is a function of $t$. For $t=1$, it reduces to $\binom{n-i-1}{i-1}$ with $0\le i\le \lfloor n/2 \rfloor$. The terms of $\{\binom{n-i-1}{i-1}\}_{i=1}^{\lfloor n/2 \rfloor}$ correspond to the sequence A011973 and A169803 in the OEIS founded by Sloane \cite{Sloane}.

\section{Asymptotic normality}\label{Sec:asynor}

This section is devoted to the study of asymptotic normality of Speyer's $g$-polynomial $g_{n,d}(t)$ for any given $t>0$. Theorem \ref{thm:rz-mfgnx} implies that the sequence $\{g_{n,d}(t)\}_{d=1}^{n-1}$ is log-concave and unimodal for any $t>0$.
The coefficients of many real-rooted polynomials appear to be asymptotically normal, see \cite{Bender, Canfield, CMW-2020, LWW-2023, Zhang-Zhao-2024} for examples. Motivated by this fact, we investigate that whether the generating function of Speyer's $g$-polynomials possess this asymptotic property.

Before showing the main result of this section, we first recall some concepts and results.
Let $\{f_n(x)\}_{n\geq 0}$ be a sequence of polynomials with nonnegative coefficients where
\begin{align}\label{def-f}
f_n(x)=\sum_{k=0}^{n}a(n,k)x^k.
\end{align}
The coefficient $a(n,k)$ is said to be asymptotically normal with mean $\mu_n$ and variance $\sigma_n^2$ by a central limit theorem if
\begin{align*}
\lim\limits_{n\rightarrow \infty} \sup\limits_{x\in \mathbb{R}}
\left| \sum\limits_{k\leq \mu_n+x\sigma_n} p(n,k) -\frac{1}{\sqrt{2\pi}}\int_{-\infty}^x \exp({-t^2/2}) dt \right|=0,
\end{align*}
where $p(n,k)=\frac{a(n,k)}{\sum_{j=0}^{n}a(n,j)}$.
The coefficient $a(n,k)$ is said to be asymptotically normal with mean $\mu_n$ and variance $\sigma_n^2$  by a local limit theorem on the real set $\mathbb{R}$ if
\begin{align*}
\lim\limits_{n\rightarrow \infty} \sup\limits_{x\in \mathbb{R}}
\left| \sigma_n p(n,\lfloor\mu_n+x \sigma_n\rfloor) -\frac{1}{\sqrt{2\pi}} \exp({-{x^2}/{2}}) \right|=0.
\end{align*}

The main result of this section is as follows.

\begin{theorem}\label{thm:asy-Sgp}
The coefficients of $h_n(x;t)$, that is, Speyer's $g$-polynomial $g_{n,d}(t)$, is asymptotically normal by local and central limit theorems for any $t>0$.
\end{theorem}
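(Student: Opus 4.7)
The plan is to apply the classical criterion of Bender \cite{Bender} and Harper \cite{Harper}: if a sequence of polynomials with nonnegative coefficients has only real zeros and the associated variance $\sigma_n^2$ tends to infinity, then the normalized coefficient sequence is asymptotically normal by both the central and local limit theorems. Real-rootedness and nonnegativity of $h_n(x;t)$ for $t>0$ are already furnished by Theorem \ref{thm:rz-mfgnx} together with \eqref{eq:ex-g-poly-defi2}, so the whole problem reduces to computing the mean $\mu_n = h_n'(1;t)/h_n(1;t)$ and showing that the variance $\sigma_n^2 = h_n''(1;t)/h_n(1;t) + \mu_n - \mu_n^2$ diverges as $n \to \infty$.

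The mean is immediate: the palindromic identity $g_{n,d}(t) = g_{n,n-d}(t)$ makes $h_n(x;t)$ palindromic of darga $n$, whence $\mu_n = n/2$. For the variance I would exploit the recurrences of Theorem \ref{rec:shiftn}. Writing $H_n := h_n(1;t)$, setting $x=1$ in \eqref{rec:gfhx-n2} gives $H_{n+2} = 2H_{n+1} + tH_n$, whose dominant characteristic root is $\alpha = 1 + \sqrt{1+t}$, so $H_n \sim c\,\alpha^n$. Differentiating \eqref{rec:gfhx-n2} twice with respect to $x$ and evaluating at $x=1$, combined with the palindromic identity $H_n' = (n/2)H_n$, yields a linear recurrence for $H_n''$ driven by a term linear in $n$.

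Dividing through by $H_{n+2}$ and using $H_{n+k}/H_{n+2} \to \alpha^{k-2}$, the quantity $v_n := H_n''/H_n$ satisfies an asymptotic recurrence whose homogeneous characteristic polynomial is $\alpha^2 r^2 - 2\alpha r - t$, with roots $1$ and $-(\sqrt{1+t}-1)/(1+\sqrt{1+t}) \in (-1,0)$. Because $r=1$ resonates with the linearly growing forcing term, a particular solution must take the form $v_n = \tfrac{n^2}{4} + Bn + O(1)$; a direct computation using $\alpha^2 = 2\alpha + t$ yields $B = (3-2\alpha)/(4(\alpha-1))$. Substituting gives
\begin{equation*}
\sigma_n^2 = v_n + \tfrac{n}{2} - \tfrac{n^2}{4} = \frac{n}{4\sqrt{1+t}} + O(1) \longrightarrow \infty,
\end{equation*}
and the Bender--Harper criterion then delivers both the central and local limit theorems.

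The main subtlety is the cancellation of the leading $n^2/4$ terms of $v_n$ and $\mu_n^2$ in the expression for $\sigma_n^2$. This cancellation is in fact forced a priori by the Bernoulli-factor decomposition of the normalized probability generating function $h_n(x;t)/H_n = x\prod_k(q_k + p_k x)$ with $p_k + q_k = 1$: each factor contributes at most $1/4$ to the variance, so $\sigma_n^2 \le (n-2)/4$ automatically. Consequently one must pin down the coefficient of $n$ in $v_n$ exactly, rather than merely estimate $v_n$ from below, in order to certify that the residual linear coefficient $1/(4\sqrt{1+t})$ is strictly positive for every $t>0$.
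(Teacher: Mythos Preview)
Your proposal is correct and reaches the same asymptotic $\sigma_n^2 \sim n/(4\sqrt{1+t})$ as the paper, but the route you take to the variance differs from the paper's in an instructive way.

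The paper does not differentiate \eqref{rec:gfhx-n2}. Instead it evaluates the two auxiliary recurrences \eqref{rec:gfhx-nx'} and \eqref{rec:gfhx-nx''} of Theorem~\ref{rec:shiftn} at $x=1$. Because the coefficient $A=(n-1)(x-1)$ in \eqref{rec:gfhx-nx'} vanishes at $x=1$, that identity collapses to $\mu_n=n/2$ directly; and \eqref{rec:gfhx-nx''} at $x=1$ gives an \emph{exact} closed form
\[
\frac{h_n''(1;t)}{h_n(1;t)}=\frac{n-1}{4(t+1)}\,\frac{h_{n+1}(1;t)}{h_n(1;t)}+\frac{n(nt+n-2t-3)}{4(t+1)},
\]
so that $\sigma_n^2=\dfrac{n-1}{4(t+1)}\,\dfrac{h_{n+1}(1;t)}{h_n(1;t)}-\dfrac{n}{4(t+1)}$ exactly, and a single limit $h_{n+1}(1;t)/h_n(1;t)\to 1+\sqrt{1+t}$ finishes the argument. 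No asymptotic recurrence for $v_n$ is needed, and the cancellation of the $n^2/4$ terms that you flag as the ``main subtlety'' happens algebraically before any limit is taken.

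Your approach, by contrast, uses only the elementary recurrence \eqref{rec:gfhx-n2}: you obtain $\mu_n=n/2$ from palindromicity (arguably cleaner than the paper's use of \eqref{rec:gfhx-nx'}), differentiate \eqref{rec:gfhx-n2} twice to get a constant-coefficient linear recurrence for $H_n''$ with forcing $(n+1)H_{n+1}+tnH_n$, and then extract the leading behaviour of $v_n=H_n''/H_n$ by an asymptotic/particular-solution argument. This buys independence from the heavier identities \eqref{rec:gfhx-nx'}--\eqref{rec:gfhx-nx''}, at the cost of an extra layer of asymptotic bookkeeping (the step where you replace $H_{n+k}/H_{n+2}$ by $\alpha^{k-2}$ and read off a ``characteristic polynomial'' for $v_n$ is heuristic as written; to make it rigorous you would solve the exact recurrence for $H_n''$, whose homogeneous part has constant coefficients, using an ansatz $(An^2+Bn)\alpha^n$, and then divide by $H_n=c_1\alpha^n+O(\beta^n)$). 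Your computed value $B=(3-2\alpha)/(4(\alpha-1))$ and the resulting linear coefficient $1/(4\sqrt{1+t})$ are correct and agree with the paper.
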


In order to prove Theorem \ref{thm:asy-Sgp}, we shall employ the following criterion, see Bender \cite{Bender} and Harper \cite{Harper}.

\begin{theorem}[{\cite[Theorem 2]{Bender}}]\label{lemm-asymp-normal}
Let $\{f_n(x)\}_{n\geq 0}$ be a real-rooted polynomial sequence with nonnegative coefficients as in \eqref{def-f}.
Let
\begin{align}
\mu_n=\frac{f_n'(1)}{f_n(1)}\quad \textrm{and} \quad
\sigma_n^2=\frac{f_n''(1)}{f_n(1)}+\mu_n-\mu_n^2.\label{eq-mu-var}
\end{align}
If $\sigma_n^2\rightarrow +\infty$ as $n \rightarrow +\infty$, then the coefficient of $f_n(x)$ is asymptotically normal  with mean $\mu_n$ and variance $\sigma_n^2$  by local and central limit theorems.
\end{theorem}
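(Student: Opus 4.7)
The plan is to reinterpret the normalized coefficient sequence $p(n,k)=a(n,k)/f_n(1)$ as the probability distribution of a sum of independent Bernoulli random variables, and then apply classical central and local limit theorems for such sums. Since $f_n(x)$ has only real zeros and nonnegative coefficients, all of its roots are real and nonpositive, so one can factor
\[
f_n(x)=a(n,d_n)\prod_{i=1}^{d_n}(x+r_{n,i}),\qquad r_{n,i}\ge 0,
\]
where $d_n=\deg f_n$. Dividing by $f_n(1)$ yields the probability generating function
\[
\frac{f_n(x)}{f_n(1)}=\prod_{i=1}^{d_n}\!\left(\frac{r_{n,i}}{1+r_{n,i}}+\frac{1}{1+r_{n,i}}\,x\right),
\]
which is exactly the probability generating function of $X_n=\sum_{i=1}^{d_n}Y_{n,i}$, where the $Y_{n,i}$ are independent Bernoulli variables with $\mathbb{P}(Y_{n,i}=1)=p_{n,i}:=1/(1+r_{n,i})$.

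First I would verify that the $\mu_n$ and $\sigma_n^2$ of the statement coincide with $\mathbb{E}[X_n]$ and $\mathrm{Var}(X_n)$: differentiating the factored form at $x=1$ gives $f_n'(1)/f_n(1)=\sum_i p_{n,i}$ and a short bookkeeping check yields $\sigma_n^2=\sum_i p_{n,i}(1-p_{n,i})$, matching the expressions in \eqref{eq-mu-var}. In particular each summand has variance at most $1/4$, so $\sigma_n^2\le d_n/4$, which will be used to rule out degenerate behaviour.

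For the central limit statement, I would apply the Lindeberg--Feller theorem to the triangular array $\{(Y_{n,i}-p_{n,i})/\sigma_n\}_{i=1}^{d_n}$. Because $|Y_{n,i}-p_{n,i}|\le 1$ uniformly, the Lindeberg condition reduces to showing $\max_i p_{n,i}(1-p_{n,i})/\sigma_n^2\to 0$, which is immediate from the hypothesis $\sigma_n\to\infty$ combined with the uniform bound $p_{n,i}(1-p_{n,i})\le 1/4$. Hence $(X_n-\mu_n)/\sigma_n$ converges weakly to $N(0,1)$, which is exactly the asserted central limit behaviour for $p(n,\cdot)$.

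The main obstacle will be the local limit theorem, which demands uniform pointwise approximation rather than weak convergence. My plan is to use Fourier inversion: since $X_n$ is integer-valued,
\[
p(n,k)=\frac{1}{2\pi}\int_{-\pi}^{\pi}\varphi_n(\theta)\,e^{-ik\theta}\,d\theta,\qquad\varphi_n(\theta):=\mathbb{E}[e^{i\theta X_n}],
\]
so I would split the integral into a small-$\theta$ window of width $\sigma_n^{-1/2+\varepsilon}$, on which a Taylor expansion of $\log\varphi_n(\theta/\sigma_n)$ reproduces the Gaussian density in the standard way, and its complement. The delicate step is controlling $|\varphi_n(\theta)|$ away from $0$ on $[-\pi,\pi]$: the elementary identity $|\mathbb{E}[e^{i\theta Y_{n,i}}]|^2=1-2p_{n,i}(1-p_{n,i})(1-\cos\theta)$ combined with $1-u\le e^{-u}$ yields $|\varphi_n(\theta)|\le\exp\!\bigl(-c\,\sigma_n^2(1-\cos\theta)\bigr)$ for a universal constant $c>0$. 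Since $\sigma_n^2\to\infty$, this forces the tail contribution to vanish uniformly; piecing the two regimes together and taking the supremum over $x\in\mathbb{R}$ then gives the local limit theorem and completes the proof.
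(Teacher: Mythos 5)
The paper does not actually prove this statement: it is imported verbatim as Theorem 2 of Bender (following Harper) and used as a black box, so there is no in-paper proof to compare against. Your proposal reconstructs what is essentially the classical argument behind that criterion, and it is correct. The Bernoulli decomposition is legitimate: real-rootedness plus nonnegative coefficients forces all zeros to be real and nonpositive, so $f_n(x)/f_n(1)=\prod_i\frac{x+r_{n,i}}{1+r_{n,i}}$ is the probability generating function of a sum of independent Bernoulli variables with parameters $p_{n,i}=1/(1+r_{n,i})$; the bookkeeping $f_n'(1)/f_n(1)=\sum_i p_{n,i}$ and $f_n''(1)/f_n(1)=\mu_n^2-\sum_i p_{n,i}^2$ does give $\sigma_n^2=\sum_i p_{n,i}(1-p_{n,i})$, so the stated $\mu_n,\sigma_n^2$ are exactly the mean and variance of $X_n$. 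Two small points are worth tightening. First, what you call ``the Lindeberg condition'' is really the Feller negligibility condition; the Lindeberg condition itself holds for an even cleaner reason, namely $|Y_{n,i}-p_{n,i}|\le 1$, so once $\varepsilon\sigma_n>1$ every truncated second moment vanishes identically. You should also say a word about why pointwise CDF convergence upgrades to the uniform statement (P\'olya's lemma, since $\Phi$ is continuous). Second, for the local limit theorem your bound $|\varphi_n(\theta)|\le\exp\bigl(-\sigma_n^2(1-\cos\theta)\bigr)$ is the right tool, and combined with $1-\cos\theta\ge 2\theta^2/\pi^2$ on $[-\pi,\pi]$ it controls the whole region outside the central window in one stroke; you also need the Lipschitz continuity of $e^{-x^2/2}$ to pass from $k=\lfloor\mu_n+x\sigma_n\rfloor$ back to $x$ when taking the supremum. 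Neither point is a genuine gap; the plan is sound and is, in substance, the Harper--Bender proof that the paper's citation points to.
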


We are now in a position to show a proof of Theorem \ref{thm:asy-Sgp}.

\begin{proof}[Proof of Theorem \ref{thm:asy-Sgp}]
Fix $t>0$ throughout this proof. For $n\ge 2$, let $f_n(x):=h_n(x;t)$, $\mu_n=\frac{f_n'(1)}{f_n(1)}$ and $\sigma_n^2=\frac{f_n''(1)}{f_n(1)}+\mu_n-\mu_n^2$. By Theorem \ref{thm:rz-mfgnx}, we have $\{f_n(x)\}_{n\ge 2}$ is a real-rooted polynomial sequence. Clearly, the coefficients of $f_n(x)$ are nonnegative by \eqref{g-uni-dn(t)}. By Theorem \ref{lemm-asymp-normal} it is sufficient to show that $\sigma_n^2\rightarrow +\infty$ as $n \rightarrow +\infty$. To this end, we need first obtain the limitation of ${f_{n+1}(1)}/{f_{n}(1)}$.
Substitute $x=1$ in the recurrence \eqref{rec:gfhx-n2}, we have
\begin{align}\label{eq:rec:fn21n1}
 f_{n+2}(1)&\,
=2 f_{n+1}(1)+t f_n(1),\qquad n\ge 2.
\end{align}
Let $r_n(x)=\frac{f_{n+1}(x)}{f_{n}(x)}$ for $n\ge 2$. Suppose that
\begin{align*}
 \lim_{n\rightarrow +\infty} r_n(1)=\lambda,
\end{align*}
where $\lambda$ is a finite real number. It follows from \eqref{eq:rec:fn21n1} that
\begin{align*}
 \lambda^2 = 2 \lambda+ t,
\end{align*}
whose zeros are
$1\pm\sqrt{t+1}$.
Since $r_n(1)>0$ for all $n\ge 2$, we have
\begin{align}\label{lim:Afn/n-1}
 \lambda=\lim_{n\rightarrow +\infty} \frac{f_{n+1}(1)}{f_{n}(1)}=1+\sqrt{t+1}.
\end{align}

We proceed to compute $\mu_n$.
By setting $x=1$ in the recursions \eqref{rec:gfhx-nx'}, we have
\begin{align*}
0\cdot f_{n+1}(1)&\,
=-2n(t+1) f_{n}(1)+4(t+1) f_{n}'(1),
\end{align*}
and hence,
\begin{align}\label{eq:fn'1/fn1}
 \mu_n=\frac{f_{n}'(1)}{f_{n}(1)}
=\frac{2n(t+1)}{4(t+1)}=\frac{n}{2}.
\end{align}

Letting $x=1$ in \eqref{rec:gfhx-nx''}, we get
\begin{align*}
 4(n-1)(t+1) f_{n+1}(1)&\,
=-4n(t+1)(nt+n-2t-3) f_{n}(1)+(4t+4)^2 f_n''(1),
\end{align*}
Thus,
\begin{align}\label{eq:fn''1/fn1}
 \frac{f_n''(1)}{f_n(1)}
=\frac{n-1}{4(t+1)}\frac{f_{n+1}(1)}{f_{n}(1)}+\frac{n(nt+n-2t-3)}{4(t+1)}.
\end{align}

It remains to compute $\sigma_n^2$ and its limitation as $n$ goes to $+\infty$.
By \eqref{eq:fn'1/fn1} and \eqref{eq:fn''1/fn1}, we derived that
\begin{align*}
 \sigma_n^2
=&\, \frac{f_n''(1)}{f_n(1)}+\mu_n-\mu_n^2\\
=&\, \frac{n-1}{4(t+1)}\frac{f_{n+1}(1)}{f_{n}(1)}+\frac{n(nt+n-2t-3)}{4(t+1)}+\frac{n}{2}-\frac{n^2}{4}\\
=&\, \frac{n-1}{4(t+1)}\frac{f_{n+1}(1)}{f_{n}(1)}-\frac{n}{4(t+1)}.
\end{align*}
By \eqref{lim:Afn/n-1},
\begin{align*}
 \lim_{n\rightarrow +\infty}\frac{\sigma_n^2}{n}
=&\, \lim_{n\rightarrow +\infty} \frac{n-1}{4n(t+1)}\frac{f_{n+1}(1)}{f_{n}(1)}
   -\lim_{n\rightarrow +\infty}\frac{n}{4n(t+1)}\\
=&\, \frac{1}{4(t+1)}\big(1+\sqrt{t+1}\big)-\frac{1}{4(t+1)}\\
=&\, \frac{1}{4\sqrt{t+1}}.
\end{align*}
So, we have
$
 \sigma_n^2 \sim \frac{n}{4\sqrt{t+1}}.
$
and hence for $t>0$,
$
 \lim_{n\rightarrow +\infty}\sigma_n^2
=+\infty.
$
This completes the proof.
\end{proof}

\section{Laguerre inequality}\label{sec:L}
In this section, we show the Laguerre inequalities arising from Speyer's $g$-polynomial $g_{n,d}(t)$ and its generating function $h_n(x;t)$ for positive $t$. A polynomial $f(x)$ is said to satisfies the \emph{Laguerre inequality} if
\begin{align}\label{defi:L-ineq}
 f'(x)^2-f(x)f''(x)\ge 0.
\end{align}
It was stated by Laguerre \cite{Laguerre1989} that if $f(x)$ is a real-rooted polynomial, then the Laguerre inequality \eqref{defi:L-ineq} holds for $f(x)$, see also \cite{Dou-Wang-2023}.

Then we obtain the following result from Theorem \ref{thm:rz-mfgnx}.

\begin{theorem}
Given $t>0$, the polynomial $f(x):=h_n(x;t)$ in $x$ satisfies the Laguerre inequality \eqref{defi:L-ineq} for $n\ge 2$.
\end{theorem}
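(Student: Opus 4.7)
The plan is to deduce this statement directly as a corollary of the real-rootedness already established in Theorem \ref{thm:rz-mfgnx}, combined with the classical Laguerre inequality cited immediately before the statement. So there is essentially no new work to do; the argument is a one-line invocation.

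Concretely, I would first observe that, for any fixed $t>0$ and $n\ge 2$, the expression $h_n(x;t)=\sum_{d=1}^{n-1} g_{U_{n,d}}(t)\,x^d$ is a bona fide polynomial in $x$ of degree exactly $n-1$ with real (in fact positive) coefficients, since each $g_{U_{n,d}}(t)>0$ for $t>0$ by the explicit formula \eqref{g-uni-dn(t)}. Thus $f(x):=h_n(x;t)$ is a nonzero real polynomial, and Laguerre's classical theorem, stated just above, applies to any such polynomial whose zeros are all real.

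Next I would appeal to Theorem \ref{thm:rz-mfgnx} to conclude that for every $t>0$ and every $n\ge 2$ the polynomial $f(x)=h_n(x;t)$ has only real zeros. By Laguerre's theorem, this real-rootedness forces the Laguerre inequality $f'(x)^2-f(x)f''(x)\ge 0$ to hold at every real $x$, which is precisely \eqref{defi:L-ineq}.

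There is no genuine obstacle here, as the heavy lifting is entirely done by Theorem \ref{thm:rz-mfgnx}. The only mild points worth pausing on are (i) confirming that $f$ is not identically zero (so that Laguerre's inequality applies nontrivially), which is immediate from the leading term $t x^{n-1}$, and (ii) recording, for clarity, that the inequality is being asserted for real arguments $x$, exactly the setting in which Laguerre's theorem is stated. Once these trivialities are noted, the proof is complete.
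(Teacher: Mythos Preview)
Your proposal is correct and matches the paper's own approach exactly: the paper simply states that the result follows from Theorem \ref{thm:rz-mfgnx} together with Laguerre's classical inequality, which is precisely your argument.
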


It was proved in \cite[Theorem 3.1]{Zhang-Zhao-2024} that Speyer's $g$-polynomial $g_{n,d}(x)$ has only real zeros. Hence, we get the following consequence.
\begin{theorem}
The Speyer's $g$-polynomial $g_{n,d}(x)$ satisfies the Laguerre inequality \eqref{defi:L-ineq} for $n\ge 2$ and $1\le d \le n-1$.
\end{theorem}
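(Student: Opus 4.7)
My plan is to invoke Laguerre's classical theorem directly: any real-rooted polynomial $f(x) \in \mathbb{R}[x]$ satisfies $f'(x)^2 - f(x)f''(x) \ge 0$ for all real $x$. This is exactly the same template used in the preceding theorem, where the Laguerre inequality for $h_n(x;t)$ was deduced from the real-rootedness established in Theorem \ref{thm:rz-mfgnx}.

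For the Speyer polynomial $g_{n,d}(x)$ itself, I would cite the external input provided by Zhang and Zhao \cite[Theorem 3.1]{Zhang-Zhao-2024}, which is the statement that $g_{n,d}(x)$ (viewed as a polynomial in $x$) has only real zeros for every $n \ge 2$ and every $1 \le d \le n-1$. Once this is in hand, the conclusion is immediate: apply Laguerre's inequality \eqref{defi:L-ineq} to the real-rooted polynomial $f(x) := g_{n,d}(x)$.

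Thus the proof is essentially a one-line reduction. The only step worth flagging explicitly is that one must check the hypotheses of Laguerre's theorem are met, namely that $g_{n,d}(x)$ is a genuine polynomial (which is clear from formula \eqref{g-uni-dn(t)}, since it is a finite sum of monomials $t^i$ with nonnegative coefficients up to degree $\min(d, n-d)$) and that all its zeros are real (the cited theorem). There is no real obstacle here, and no further calculation is required beyond recording these facts.
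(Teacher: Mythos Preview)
Your proposal is correct and matches the paper's own argument exactly: the paper simply cites \cite[Theorem~3.1]{Zhang-Zhao-2024} for the real-rootedness of $g_{n,d}(x)$ and then invokes Laguerre's theorem to conclude \eqref{defi:L-ineq}. No additional work is done beyond what you describe.
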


Jensen \cite{Jensen} introduced a definition of generalized Laguerre inequality, that is, for $f(x)\in \mathcal{L}{\textrm{-}}\mathcal{P}$,
\begin{align}\label{defi:L-ineq-ge}
L_r(f(x))=\frac{1}{2}\sum_{k=0}^{2r}(-1)^{r+k}\binom{2r}{k}f^{(k)}(x) f^{(2r-k)}(x)\ge 0,
\end{align}
where $f^{(k)}(x)$ is the $k$-th derivative of $f(x)$. Clearly, \eqref{defi:L-ineq-ge} reduces to the classical Laguerre inequality \eqref{defi:L-ineq} for $r=1$. It is worth mentioning that Csordas and Vishnyakova \cite{Csordas-Vishnyakova} showed that if a function $f(x)$ satisfies \eqref{defi:L-ineq-ge} for all $r$ and all $x\in \mathbb{R}$, then $f(x)\in \mathcal{L}{\textrm{-}}\mathcal{P}$, which implies that the Laguerre inequality is a characterizing property of functions belonging to the Laguerre-P\'{o}lya class.
For more properties on Laguerre inequality, see \cite{Wagner2022, Dou-Wang-2023, Wang-Yang2024} and the references therein.

To conclude this section, we propose the following conjecture.

\begin{conjecture}
Given $t>0$, the polynomial $h_n(x;t)$ in $x$ satisfies the generalized Laguerre inequality \eqref{defi:L-ineq-ge} for $r\ge 1$.
\end{conjecture}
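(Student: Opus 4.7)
The plan is to derive the conjecture as a direct consequence of Theorem \ref{thm:rz-mfgnx} via the classical fact that every function in the Laguerre--P\'olya class satisfies the generalized Laguerre inequality for all $r\ge 1$. By Theorem \ref{thm:rz-mfgnx}, for $t>0$ and $n\ge 2$ the polynomial $h_n(x;t)$ has only real zeros, and since its coefficients $g_{n,d}(t)$ are nonnegative, these zeros must all be nonpositive.

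The key step is to verify $h_n(x;t)\in \mathcal{L}\text{-}\mathcal{P}$ in the sense of the product representation recalled in Section \ref{sec:1}. Let $m$ be the multiplicity of $0$ as a zero and let $-s_1,\ldots,-s_N$ (with $N=n-1-m$ and $s_k\in\mathbb{R}\setminus\{0\}$) be the remaining zeros of $h_n(x;t)$. Then
\[
 h_n(x;t)=c\, x^m\prod_{k=1}^{N}\left(1+\frac{x}{s_k}\right)
=c\, x^m e^{\beta x}\prod_{k=1}^{N}\left(1+\frac{x}{s_k}\right)e^{-x/s_k},
\]
where $\beta=\sum_{k=1}^N 1/s_k$; taking $\alpha=0$ and observing that $\sum s_k^{-2}<\infty$ is automatic for a finite product, all conditions of the definition are met, so $h_n(x;t)\in \mathcal{L}\text{-}\mathcal{P}$. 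Invoking Jensen's classical theorem, which asserts $L_r(f)(x)\ge 0$ for every $r\ge 1$ and every $x\in \mathbb{R}$ whenever $f\in \mathcal{L}\text{-}\mathcal{P}$, and specializing to $f=h_n(x;t)$ then yields the conjecture.

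The main obstacle is conceptual rather than computational: the conjecture is left open in the paper, which suggests the author may prefer a proof avoiding the deep analytic machinery of $\mathcal{L}\text{-}\mathcal{P}$ and the full strength of Jensen's inequality for arbitrary $r$. A more combinatorial route---for instance, an induction on $r$ using the recurrences of Theorem \ref{rec:shiftn}, or a symmetric-function argument expressing $L_r(h_n)/h_n^2$ in terms of power sums of the reciprocal zeros of $h_n(x;t)$---is elementary for $r=1$ (where the Laguerre inequality reduces to the Tur\'an-type identity $-(f'/f)'=\sum (x-r_k)^{-2}$) but becomes genuinely difficult as $r$ grows; producing such a self-contained proof would be the principal technical challenge.
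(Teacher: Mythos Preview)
The paper states this as a \emph{conjecture} and offers no proof, so there is nothing to compare your attempt against. Your argument is correct and in fact settles the conjecture outright: by Theorem~\ref{thm:rz-mfgnx} the polynomial $h_n(x;t)$ has only real zeros for $t>0$ and $n\ge 2$, hence---exactly as you verify via the product representation---it lies in the Laguerre--P\'olya class, and the classical result of Jensen (which the paper itself records immediately above the conjecture, attributing \eqref{defi:L-ineq-ge} to functions in $\mathcal{L}$-$\mathcal{P}$) then yields $L_r\bigl(h_n(\cdot\,;t)\bigr)(x)\ge 0$ for every $r\ge 1$ and every real $x$.

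Two minor remarks. First, the nonpositivity of the zeros that you note is true but unnecessary: the definition of $\mathcal{L}$-$\mathcal{P}$ in Section~\ref{sec:1} allows the $x_k$ to be arbitrary nonzero reals, so any real-rooted polynomial belongs to $\mathcal{L}$-$\mathcal{P}$ regardless of the signs of its roots. Second, your closing paragraph, which frames the ``main obstacle'' as the author's possible preference for a more elementary route, misdiagnoses the situation. The argument you give is complete and rigorous using only facts the paper already cites; that the statement appears as a conjecture is most plausibly an oversight---the author recorded both ingredients (real-rootedness of $h_n$ and Jensen's inequality for $\mathcal{L}$-$\mathcal{P}$ functions) in adjacent paragraphs without combining them.
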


\section*{Acknowledgments}
The author wishes to thank Arthur L. B. Yang for introducing him to this topic and for valuable discussions.


\begin{thebibliography}{alpha}

\bibitem{Athanasiadis}
C.A. Athanasiadis,
Gamma-positivity in combinatorics and geometry,
S\'{e}min. Lothar. Comb. 77 (2018), Article B77i.

\bibitem{Bender}
E.A. Bender,
Central and local limit theorems applied to asymptotic enumeration,
J. Combin. Theory Ser. A  15 (1973), 91--111.

\bibitem{BEST-2023}
A. Berget, C. Eur, H. Spink, D. Tseng,
Tautological classes of matroids,
Invent. Math. 233 (2023), 951--1039.

\bibitem{Brenti}
F. Brenti,
Unimodal, log-concave, and P\'{o}lya frequency sequences in combinatorics,
Mem. Amer. Math. Soc. 413 (1989), 1--106.

\bibitem{Canfield}
E.R. Canfield,
Central and local limit theorems for the coefficients of polynomials of binomial type,
J. Combin. Theory Ser. A 23 (1977), 275--290.

\bibitem{Boros-Moll-2004}
G. Boros, V. Moll,
Irresistible integrals,
Cambridge University Press, Cambridge, 2004.

\bibitem{Branden2004}
P. Br\"{a}nd\'{e}n,
Sign-graded posets, unimodality of $W$-polynomials and the Charney-Davis conjecture,
Electron. J. Combin. 11(2) (2004), \#R9.

\bibitem{Branden2008}
P. Br\"{a}nd\'{e}n,
Actions on permutations and unimodality of descent polynomials,
European J. Combin. 29 (2008), 514--531.

\bibitem{Branden2011}
P. Br\"{a}nd\'{e}n,
Iterated sequences and the geometry of zeros,
J. Reine Angew. Math. 658 (2011), 115--131.

\bibitem{Boas1954}
R.P. Boas, Jr.,
{\it Entire functions},
Academic Press, Inc., New York, 1954.

\bibitem{Chen2017}
W.Y.C. Chen,
The spt-function of Andrews, in \textit{Surveys in combinatorics 2017}, 141--203,
London Math. Soc. Lecture Note Ser., 440, Cambridge Univ. Press, Cambridge.

\bibitem{CJW2019}
W.Y.C. Chen, D.X.Q. Jia, L.X.W. Wang,
Higher order Tur\'{a}n inequalities for the partition function,
Trans. Amer. Math. Soc. 372(3) (2019), 2143--2165.

\bibitem{CMW-2020}
X. Chen, J. Mao, Y. Wang,
Asymptotic normality in $t$-stack sortable permutations,
Proc. Edinb. Math. Soc. (2), 63(4) (2020), 1062--1070.

\bibitem{Craven-Csordas1989}
T. Craven, G. Csordas,
Jensen polynomials and the Tur\'{a}n and Laguerre inequalities,
Pacific J. Math. 136(2) (1989), 241--260.

\bibitem{CNV1986}
G. Csordas, T.S. Norfolk, R.S. Varga,
The Riemann hypothesis and the Tur\'{a}n inequalities,
Trans. Amer. Math. Soc. 296(2) (1986), 521--541.

\bibitem{Csordas-Varga1990}
G. Csordas, R.S. Varga,
Necessary and sufficient conditions and the Riemann hypothesis,
Adv. in Appl. Math. 11(3) (1990), 328--357.

\bibitem{Csordas-Vishnyakova}
G. Csordas, A. Vishnyakova,
The generalized Laguerre inequalities and functions in the Laguerre-P\'{o}lya class,
Cent. Eur. J. Math. 11(9) (2013), 1643--1650.

\bibitem{Dimitrov1998}
D.K. Dimitrov,
Higher order Tur\'{a}n inequalities,
Proc. Amer. Math. Soc. 126(7) (1998), 2033--2037.

\bibitem{DL2011}
D.K. Dimitrov, F.R. Lucas,
Higher order Tur\'{a}n inequalities for the Riemann $\xi$-function,
Proc. Amer. Math. Soc. 139(3) (2011), 1013--1022.

\bibitem{Dong-Ji2024}
J.J.W. Dong, K.Q. Ji,
Higher order Tur\'{a}n inequalities for the distinct partition function,
J. Number Theory 260 (2024), 71--102.

\bibitem{Dou-Wang-2023}
L.-M. Dou, L.X.W. Wang,
Higher order Laguerre inequalities for the partition function,
Discrete Math. 346(6) (2023), 113366.

\bibitem{Ferroni}
L. Ferroni,
Schubert matroids, Delannoy paths, and Speyer's invariant,
Combinatorial Theory, 3(3) (2023), No. 13, 22 pp.

\bibitem{Ferroni-Schroter-2023}
L. Ferroni, B. Schr\"{o}ter,
Valuative invariants for large classes of matroids,
J. Lond. Math. Soc, to appear.

\bibitem{FinkSpeyer2012}
A. Fink, D.E. Speyer,
$K$-classes for matroids and equivariant localization,
Duke Math. J. 161(14) (2012), 2699--2723.

\bibitem{Fisk}
S. Fisk,
Questions about determinants and polynomials, arXiv:0808.1850, 2008.

\bibitem{Fota-Schutzenberger1970}
D. Foata and M.-P. Sch\"{u}tzenberger,
Th\'{e}orie G\'{e}ometrique des Polyn\^{o}mes Eul\'{e}riens,
Lecture Notes in Mathematics 138, Springer-Verlag, 1970.

\bibitem{Gal2005}
\'{S}.R. Gal,
Real root conjecture fails for five- and higher-dimensional spheres,
Discrete Comput. Geom. 34 (2005), 269--284.

\bibitem{GORZ2019}
M. Griffin, K. Ono, L. Rolen, D. Zagier,
Jensen polynomials for the Riemann zeta function and other sequences,
Proc. Natl. Acad. Sci. USA 116(23) (2019), 11103--11110.

\bibitem{HKT2006}
P. Hacking, S. Keel, E. Tevelev,
Compactification of the moduli space of hyperplane arrangements,
J. Algebraic Geom. 15 (2006), 657--680.

\bibitem{Harper}
L.H. Harper,
Stirling behavior is asymptotically normal,
Ann. Math. Statist. 38 (1967), 410--414.

\bibitem{Jensen}
J.L.W.V. Jensen,
Recherches sur la th\'{e}orie des \'{e}quations,
Acta Math. 36(1) (1913), 181--195.

\bibitem{Kapranov1993}
M. Kapranov,
Chow quotients of Grassmannians I,
Adv. Soviet Math. 16(2) (1993), 29--110.

\bibitem{Koutschan2009}
C. Koutschan,
Advanced Applications of the Holonomic Systems Approach, Doctoral Thesis, Research Institute for Symbolic Computation (RISC), Johannes Kepler Uiversity, Linz, Austria, 2009.

\bibitem{Laguerre1989}
E. Laguerre,
Oeuvres, vol. 1, Gaauthier-Villars, Paris, 1898.

\bibitem{LWW-2023}
H. Liang, Y. Wang, Y. Wang,
Analytic aspects of generalized central trinomial coefficients,
J. Math. Anal. Appl. 527(1) (2023), 127424.

\bibitem{LiuWang}
L.L. Liu, and Y. Wang,
A unified approach to polynomoal sequences with only real zeros,
Adv. Appl. Math. 38 (2007), 542--560.

\bibitem{LRS2020}
L. L\'{o}pez de Medrano, F. Rinc\'{o}n, K. Shaw,
Chern-Schwartz-MacPherson cycles of matroids,
Proc. Lond. Math. Soc. (3) 120(1) (2020),  1--27.

\bibitem{Marik1964}
J. Ma\v{r}\'{i}k ,
On polynomials, all of whose zeros are real (Czech, with Russian and German summaries),
\v{C}asopis P\v{e}st. Mat. 89 (1964), 5--9.

\bibitem{McNamara-Sagan}
P.R. McNamara, B. Sagan,
Infinite log-concavity: developments and conjectures,
Adv. Appl. Math. 44 (2010), 1--15.

\bibitem{Niculescu2000}
C.P. Niculescu,
A new look at Newton's inequalities,
JIPAM. J. Inequal. Pure Appl. Math. 1(2) (2000), Article 17, 14 pp.

\bibitem{Ore1933}
O. Ore,
Theory of non-commutative polynomials,
Ann. Math. 34 (1933), 480--508.

\bibitem{Polya1927}
{G. P\'{o}lya},
\"{U}ber die algebraisch-funktionentheoretischcen Untersuchungen von J. L. W. V. Jensen.
Kgl. Danske Vid. Sel. Math.-Fys. Medd. 7 (1927), 3--33.

\bibitem{Polya-Schur1933}
G. P\'{o}lya, J. Schur,
\"{U}ber zwei Arten von Faktorenfolgen in der Theorie der algebraischen Gleichungen (German),
J. Reine Angew. Math. 144 (1914), 89--113.

\bibitem{Rosset1989}
S. Rosset,
Normalized symmetric functions, Newton's inequalities and a new set of stronger inequalities,
Amer. Math. Monthly 96(9) (1989), 815--819.

\bibitem{Sloane}
N.J.A. Sloane,
The on-line encyclopedia of integer sequences, https://oeis.org.

\bibitem{Speyer2008}
D.E. Speyer,
Tropical linear spaces,
SIAM J. Discrete Math. 22(4) (2008), 1527--1558.

\bibitem{Speyer2009}
D.E. Speyer,
A matroid invariant via the $K$-theory of the Grassmannian,
Adv. Math. 221(3) (2009) 882--913.

\bibitem{Stanley}
R.P. Stanley, Log-concave and unimodal sequences in algebra, combinatorics, and geometry,
in Graph Theory and Its Applications: East and West, Ann. New York Acad. Sci. 576 (1989), 500--535.

\bibitem{Stembridge1997}
J.R. Stembridge,
Enriched $P$-partitions,
Trans. Amer. Math. Soc. 349 (1997), 763--788.

\bibitem{SWZ-2015}
H. Sun, Y. Wang, H.X. Zhang,
Polynomials with palindromic and unimodal coefficients,
Acta Math. Sin. (Engl. Ser.) 31(4) (2015), 565--575.

\bibitem{Szego1948}
G. Szeg\"{o},
On an inequality of P. Tur\'{a}n concerning Legendre polynomials,
Bull. Amer. Math. Soc. 54 (1948), 401--405.

\bibitem{Turan}
P. Tur\'{a}n,
On the zeros of the polynomials of Legendre,
\v{C}asopis Pest. Mat. Fys. 75 (1950), 113--122.

\bibitem{Wagner2022}
I. Wagner,
On a new class of Laguerre-P\'{o}lya type functions with applications in number theory,
Pacific J. Math. 320(1) (2022), 177--192.

\bibitem{Wang-Zhang2024}
B. Wang, C.X.T. Zhang,
Some criteria for higher order Tur\'{a}n inequalities in the spirit of Ma\v{r}\'{i}k's theorem,
J. Difference Equ. Appl. 30(5) (2024), 659--669.

\bibitem{Wang2019}
L.X.W. Wang,
Higher order Tur\'{a}n inequalities for combinatorial sequences,
Adv. in Appl. Math. 110 (2019), 180--196.

\bibitem{Wang-Yang2024}
L.X.W. Wang, N.N.Y. Yang,
Laguerre inequalities and complete monotonicity for the Riemann Xi-function and the partition function,
Trans. Ams. Math. Soc. 377(7) (2024), 4703--4725 .

\bibitem{Zeilberger1989}
D. Zeilberger,
A one-line high school proof of the unimodality of the Gaussian polynomials ${n\brack k}$ for $k<20$,
in: D. Stanton (Ed.), $q$-Series and Partitions, Proc. Workshop, Minneapolis, MN (USA) 1988, IMA Vol. Math. Appl. 18 (1989), 67--72.

\bibitem{Zhang-Zhao-2024}
R. Zhang, J. J. Y. Zhao,
Analytic properties of Speyer's $g$-polynomial of uniform matroids,
arXiv:2408.15506v1, 2024.


\end{thebibliography}
\end{document}